\documentclass{amsart}
\usepackage[margin=1in]{geometry}
\usepackage{amsmath,amssymb,amsthm}
\usepackage{hyperref}

\newtheorem{theorem}{Theorem}[section]
\newtheorem{proposition}[theorem]{Proposition}
\newtheorem{lemma}[theorem]{Lemma}

\newcommand{\T}{\mathbb T}

\title{Uniqueness of synchronized stationary equilibria in the Kuramoto mean field game}
\author{Sebastian Munoz}
\address{Department of Mathematics, University of California, Los Angeles, Los Angeles, CA 90095, USA}
\email{sebastian@math.ucla.edu}
\subjclass[2020]{91A16, 35Q89, 49N80, 35B32, 35J60, 60H30}
\keywords{Kuramoto model, mean field games, synchronization, stationary equilibrium, bifurcation, Hamilton--Jacobi--Bellman equation, uniqueness}
\date{}

\begin{document}

\begin{abstract}
The stationary Kuramoto mean field game models a population of phase
oscillators that form synchronized Nash equilibria above a critical
interaction strength. We prove that the synchronized branch is a
unique smooth family of Nash equilibria emerging from the uniform
state at the bifurcation: at each supercritical interaction strength
the synchronized equilibrium is unique up to rotation of the torus,
and converges smoothly to the uniform distribution as
the interaction parameter decreases to the critical threshold. Both follow
from our main technical result: the scalar self-consistency map is strictly concave, settling a
conjecture of~\cite{CCS}. The proof decomposes the second derivative
of the self-consistency map into two sign-indefinite moments of the
equilibrium---a cubic moment and a gradient moment---and controls
their signs through sharp shape estimates for the value function, a
pointwise geometric-mean monotonicity that determines the sign of the
cubic moment via a cosine-skewness inequality, and a reflection argument combined with
a correlation inequality for the gradient moment.
\end{abstract}

\maketitle

\section{Introduction}\label{sec:intro}

In a non-monotone mean field game, multiple nontrivial Nash equilibria
can a priori coexist, and a central structural question is whether a
given phase transition selects a unique branch organized by the model
parameters. We resolve this question for
the Kuramoto mean field game studied by Carmona, Cormier, and
Soner~\cite{CCS}, a mean field game
model of phase synchronization. The underlying Kuramoto
model~\cite{Kuramoto,Strogatz,ABPVS} is a paradigm
of phase-transition synchronization: a population of coupled phase
oscillators that remains incoherent below a critical coupling strength
and self-organizes into a synchronized state above that threshold. In the classical
mean-field limit, where the population evolves according to a
prescribed McKean--Vlasov dynamics, the phase transition and the
stability of the incoherent state are by now well understood at the
level of both the nonlinear Fokker--Planck equation~\cite{GPP} and the
fluctuations of the underlying interacting particle system~\cite{BGP}.

A mean field game~\cite{LL,HMC,CarmonaDelarue} version of the model,
in which each oscillator chooses its dynamics to minimize a cost
rather than following a prescribed law of motion, was studied
formally in~\cite{YMMS} and used in~\cite{CG} for an analysis of
jet-lag recovery. Related bifurcation-theoretic and global techniques for
non-monotone mean field games appear in~\cite{Cirant}. The first
rigorous analysis of the resulting stationary mean field game was
given in~\cite{CCS}, where it was shown that the
synchronization phase transition survives in the game-theoretic
setting as a bifurcation of the Nash equilibria, with full
synchronization in the strong-coupling limit $\kappa\to\infty$. This
framework has since been extended to heterogeneous populations with
random intrinsic frequencies~\cite{CCS2}, and adapted in~\cite{HS} to
a finite-state synchronization mean field game exhibiting the same
phase-transition phenomenology.

In the model we study, the representative oscillator is a controlled
diffusion on the torus $\T:=\mathbb R/2\pi\mathbb Z$,
\[
dX_t=\alpha_t\,dt+\sigma\,dB_t,
\]
in which the agent chooses the drift $\alpha$ to minimize the
discounted cost
\[
J(\alpha):=\mathbb E\!\left[\int_0^\infty e^{-\beta t}
\Bigl(\kappa\,c(X_t,\mu_t)+\tfrac12\alpha_t^2\Bigr)dt\right]
\]
against the prevailing population distribution
$\mu_t\in\mathcal P(\T)$. Here $\beta>0$ is the discount rate,
$\sigma>0$ the noise level, $\kappa>0$ the interaction strength, and
\[
c(\theta,\mu):=\int_\T 2\sin^2\!\tfrac{\theta-\theta'}{2}\,\mu(d\theta')
=1-\mu\bigl(\cos(\theta-\cdot)\bigr)
\]
is the classical Kuramoto coupling, which penalizes misalignment of the
agent's phase with the population. For a prescribed time-dependent
population flow $(\mu_t)_{t\geq 0}$, let $X^{t,x,\alpha}$ denote the
controlled process started from $x$ at time $t$, and define the value
function
\[
\Phi(t,x):=\inf_\alpha\mathbb E\!\left[\int_t^\infty e^{-\beta(s-t)}
\Bigl(\kappa\,c(X_s^{t,x,\alpha},\mu_s)
+\tfrac12\alpha_s^2\Bigr)ds\right],
\]
where $dX_s^{t,x,\alpha}=\alpha_s\,ds+\sigma\,dB_s$ and
$X_t^{t,x,\alpha}=x$. Dynamic programming gives the feedback
$\alpha_s^*=-\Phi_x(s,X_s)$ and the Hamilton--Jacobi--Bellman (HJB) equation
\[
-\Phi_t+\beta\Phi-\frac{\sigma^2}{2}\Phi_{xx}
+\tfrac12\Phi_x^2=\kappa\,c(x,\mu_t).
\]

A \emph{Nash equilibrium} is a population flow $(\mu_t)_{t\geq 0}$ such
that, when the representative oscillator uses this optimal feedback,
the law of the optimally controlled state is exactly the prescribed
population flow: $\mathcal L(X_t^*)=\mu_t$ for all $t\geq 0$. The full
time-dependent equilibrium problem is therefore a forward--backward
fixed point: the HJB equation couples $\Phi$ at time $t$ to the future
of $(\mu_s)_{s\geq t}$, while the controlled law
$\mathcal L(X_t^*)$ propagates forward in $t$.

In this paper we focus on stationary equilibria, of the form
$\mu_t\equiv\mu$. These are the game-theoretic analogue of the
incoherent and synchronized phases of the classical Kuramoto model:
they describe possible long-run organized states of the population,
and the phase transition observed in ~\cite{CCS} occurs through a bifurcation of
such stationary Nash equilibria. Uniqueness and continuity of the
stationary synchronized branch are thus basic structural questions,
prior to any analysis of stability or non-stationary selection.

In the stationary case the infinite-horizon control problem is
time-homogeneous, so $\Phi(t,x)=\phi(x)$, where
$\phi\in C^\infty(\T)$ solves the stationary
HJB equation
\[
\beta\phi-\frac{\sigma^2}{2}\phi_{xx}+\tfrac12\phi_x^2=\kappa\,c(\cdot,\mu),
\]
and the optimal feedback is $\alpha_t^*=-\phi_x(X_t)$. The stationary
equilibrium condition is that $\mu$ be invariant for the optimal
diffusion $dX_t=-\phi_x(X_t)\,dt+\sigma\,dB_t$, meaning that
$X_0\sim\mu$ implies $X_t\sim\mu$ for all $t\geq 0$. This invariant
law has density
\[
\mu(dx)\propto \exp(-2\phi(x)/\sigma^2)\,dx \qquad\text{on }\T.
\]
The interaction $c$ is rotation-invariant, so any rotation of an equilibrium is itself an
equilibrium. Rotating so that $\mu(\sin)=0$ and $\mu(\cos)\geq 0$
(with $\mu(\cos)>0$ in the synchronized case), and absorbing the constant
$\kappa$ on the right-hand side of the HJB into $\phi$, the
self-consistency condition reduces to
\begin{equation}\label{eq:self-consistency}
\mu=\mu^{\kappa\,\mu(\cos)},
\end{equation}
where, for each $\gamma\geq 0$, $\mu^\gamma$ is the probability measure
on $\T$ defined analogously from $\phi^\gamma\in C^\infty(\T)$, the
unique smooth solution of
\begin{equation}\label{eq:HJB-intro}
\beta\phi^\gamma-\frac{\sigma^2}{2}\phi^\gamma_{xx}
+\tfrac12(\phi^\gamma_x)^2=-\gamma\cos x,\qquad x\in\T.
\end{equation}
With this normalization,~\eqref{eq:self-consistency} is the fixed-point equation
$\gamma=F_\kappa(\gamma)$ on $[0,\infty)$ for the scalar
\emph{self-consistency map}
\[
F_\kappa(\gamma):=\kappa\,\mu^\gamma(\cos),
\]
whose value at the order parameter $\gamma=\kappa\mu(\cos)$ measures
the population-averaged force pulling each agent toward synchrony.
The choice $\gamma=0$ recovers the uniform measure, which is therefore
a stationary equilibrium for every $\kappa$. We refer to non-uniform
stationary equilibria as \emph{synchronized}. A short calculation,
carried out in Section~\ref{sec:setup}, gives
$F_\kappa(0)=0$, $F_\kappa'(0)=\kappa/\kappa_c$, and
$|F_\kappa|\leq\kappa$, where the \emph{critical interaction parameter}
is
\[
\kappa_c:=\beta\sigma^2+\frac{\sigma^4}{2}.
\]
In~\cite{CCS} it is shown that a synchronized stationary equilibrium
exists for every $\kappa>\kappa_c$, and that any sequence of such equilibria converges
up to rotation to $\delta_0$ (full synchronization) as
$\kappa\to\infty$. In the subcritical regime, global attraction of the uniform state is established
in the smaller weak-interaction regime $\kappa<\beta\sigma^2/4$, while for all
$\kappa<\kappa_c$, they construct equilibrium flows, starting from initial
laws sufficiently close to the uniform distribution, that converge exponentially
to the uniform state. In the numerical experiments of~\cite{CCS},
$F_\kappa$ was always concave and the synchronized equilibrium
was unique up to rotation. On this basis it was
conjectured~\cite[Rem.~7.4]{CCS} that $F_\kappa$ is concave on
$[0,\infty)$ for every $\beta,\sigma,\kappa>0$. Two structural
consequences sought there follow at once from this concavity: the
synchronized equilibrium would be unique up to rotation, and would
converge to the uniform measure at the bifurcation $\kappa=\kappa_c$.

The first partial result toward this conjecture was obtained by
Cesaroni and Cirant~\cite{CesaroniCirant}, who work with a closely
related model:
the ergodic Kuramoto mean field game, in which the discount
factor $\beta>0$ of~\eqref{eq:HJB-intro} is replaced by an ergodic
constant determined together with the value function and the
equilibrium measure. For that model it is shown that, up to rotation,
the synchronized stationary equilibrium is unique provided the
interaction parameter $\kappa$ is sufficiently large, together with a
local stability/turnpike result for even finite-horizon equilibria in
the same regime. Their argument is variational and uses
energetic estimates that are sharp in the strong-coupling limit.
It leaves open the interval between the corresponding bifurcation
threshold and their large-coupling threshold, and in particular gives
no information about the synchronized branch near the bifurcation,
which is the regime in which the phase transition itself occurs. The full concavity conjecture and
the behavior of the synchronization branch at the threshold have
remained open in either formulation.

\begin{theorem}\label{thm:main}
For every $\beta,\sigma,\kappa>0$:
\begin{enumerate}
\item[\textup{(i)}] the map $F_\kappa$ is strictly concave on
$[0,\infty)$.
\item[\textup{(ii)}] for $\kappa\leq\kappa_c$ the stationary Kuramoto
mean field game admits no synchronized stationary equilibrium, while
for $\kappa>\kappa_c$ it admits, up to rotations of $\T$, a unique
synchronized stationary equilibrium $\mu_\kappa$ (normalized by
$\mu_\kappa(\sin)=0$ and $\mu_\kappa(\cos)>0$). Moreover, the density
of $\mu_\kappa$ depends $C^\infty$-smoothly on $\kappa\in(\kappa_c,\infty)$
as a map into $C^\infty(\T)$, and converges to the uniform density
$1/(2\pi)$ in $C^\infty(\T)$ as $\kappa\downarrow\kappa_c$.
\end{enumerate}
\end{theorem}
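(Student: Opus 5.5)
The plan is to prove (i) first and then obtain (ii) from it by elementary one-variable arguments together with the implicit function theorem.

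For (i), I would substitute $\phi^\gamma=-\sigma^2\log u^\gamma$. This Hopf--Cole transform sends \eqref{eq:HJB-intro} to
\[
\tfrac{\sigma^4}{2}u_{xx}=\gamma u\cos x-\beta\sigma^2 u\log u,
\]
and the equilibrium density becomes $\mu^\gamma\propto (u^\gamma)^2$. By the implicit function theorem applied to \eqref{eq:HJB-intro}, $\gamma\mapsto\phi^\gamma$ is smooth into $C^\infty(\T)$. Uniqueness of the solution forces $\phi^\gamma$ to be even. A maximum-principle argument on $\phi_x$ shows it is increasing on $(0,\pi)$, since $-\gamma\cos x$ is increasing there.

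Next, differentiate in $\gamma$. Set $v=\partial_\gamma\phi^\gamma$ and $w=\partial^2_\gamma\phi^\gamma$. They solve the linearized equations
\[
\beta v-\tfrac{\sigma^2}{2}v_{xx}+\phi_xv_x=-\cos x,\qquad
\beta w-\tfrac{\sigma^2}{2}w_{xx}+\phi_xw_x=-v_x^2 .
\]
Writing $\langle\cdot\rangle$ for expectation under $\mu^\gamma$, we get
\[
F_\kappa''(\gamma)/\kappa=-\tfrac{2}{\sigma^2}\,\mathrm{Cov}(\cos,w)+\tfrac{4}{\sigma^4}\bigl(\langle\cos\, v^2\rangle-2\langle\cos\,v\rangle\langle v\rangle-\langle\cos\rangle\langle v^2\rangle+2\langle\cos\rangle\langle v\rangle^2\bigr).
\]
The operator $L=\tfrac{\sigma^2}{2}\partial_{xx}-\phi_x\partial_x$ is self-adjoint in $L^2(\mu^\gamma)$. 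Using this, I would convert $\mathrm{Cov}(\cos,w)$ into a gradient moment of the form $\langle v_x^2\,\psi\rangle$, where $\psi$ solves $(\beta-L)\psi=\cos-\langle\cos\rangle$. The second derivative then splits into two pieces: a cubic moment (the centered third cumulant of $v$ weighted against $\cos$) and this gradient moment.

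I would then sign each piece separately.
\begin{itemize}
\item Both $v$ and $\psi$ are even, and by the same maximum-principle argument they are monotone on $(0,\pi)$ with the signs of $-\cos$ and $\cos$ respectively. Two-sided shape estimates should bound $v$ and $\psi$ between explicit multiples of $\cos$ plus constants; these should be sharp at $\gamma=0$, where $v=-\cos x/(\beta+\sigma^2/2)$.
\item The gradient moment: $v_x^2$ is symmetric about $\pi/2$ up to distortion. A reflection $x\mapsto\pi-x$ on $(0,\pi)$, combined with the monotonicity of $\mu^\gamma$ (larger near $0$) and a Chebyshev/FKG-type correlation inequality between the two functions, should give the sign.
\item The cubic moment: this amounts to a skewness inequality, namely that $\cos$ under $\mu^\gamma$ is negatively skewed in the appropriate sense. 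I would aim for a pointwise monotonicity of the ratio of $\mu^\gamma(x)\mu^\gamma(\pi-x)$ to its geometric mean, so as to reduce to the case of a von Mises law. There one checks the inequality directly, via Bessel function ratios.
\end{itemize}
Strictness should come from the strictness of the correlation inequality whenever $\gamma>0$. At $\gamma=0$ one checks by direct expansion that $F_\kappa''(0)<0$, or uses a continuity argument.

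I expect this sign analysis to be the main obstacle. Neither term has a definite sign in general, so the step I would try first is to prove that the gradient moment alone dominates, using the shape estimates.

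For (ii), the map $G(\gamma)=F_\kappa(\gamma)-\gamma$ is strictly concave with $G(0)=0$, $G'(0)=\kappa/\kappa_c-1$, and $G(\gamma)\le\kappa-\gamma<0$ for large $\gamma$.
\begin{itemize}
\item If $\kappa\le\kappa_c$, then $G'(0)\le 0$, and strict concavity gives $G<0$ on $(0,\infty)$. Hence no synchronized equilibrium exists.
\item If $\kappa>\kappa_c$, then $G$ has exactly one positive zero $\gamma_\kappa$, and strict concavity gives $G'(\gamma_\kappa)<0$.
\end{itemize}
Since every synchronized equilibrium after rotation corresponds to a positive fixed point, uniqueness up to rotation follows.

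For the dependence on $\kappa$, consider $H(\kappa,\gamma)=\kappa\mu^\gamma(\cos)-\gamma$. It is smooth, and $\partial_\gamma H=G'(\gamma_\kappa)\neq0$ at the fixed point, so the implicit function theorem makes $\gamma_\kappa$ smooth in $\kappa$. Composing with the smooth map $\gamma\mapsto\mu^\gamma$ into $C^\infty(\T)$ gives smoothness of the density.

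Finally, as $\kappa\downarrow\kappa_c$ we need $\gamma_\kappa\to0$. Any limit point $\bar\gamma>0$ would be a positive fixed point of $F_{\kappa_c}$, which has already been excluded. Together with $\mu^0=1/(2\pi)$ and the continuity of $\gamma\mapsto\mu^\gamma$, this yields convergence to the uniform density in $C^\infty(\T)$.
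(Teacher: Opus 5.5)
Your deduction of part (ii) from part (i) is correct and is essentially the paper's argument. For $\kappa\downarrow\kappa_c$, note that $\gamma_\kappa=F_\kappa(\gamma_\kappa)\le\kappa$ keeps the fixed points bounded, so limit points exist.

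The gap is in part (i), which carries all of the content. You set up a correct formula for $F_\kappa''$ but never give an argument that signs it. Your adjoint function is not a new object. Since the forcing and the observable are both $\cos$, the solution of $(\beta-L)\psi=\cos-\langle\cos\rangle$ is $\psi=-g$ with $g:=v-\langle v\rangle$. One more integration by parts turns your formula into
\[
\frac{F_\kappa''(\gamma)}{\kappa}=-\frac{4\beta}{\sigma^4}\langle g^3\rangle-\frac{6}{\sigma^2}\langle g\,g_x^2\rangle ,
\]
which is Lemma~\ref{lem:second-deriv} in dimensional variables. The paper proves that each term separately has the favorable sign: $\langle g^3\rangle\ge0$ and $\langle g\,g_x^2\rangle>0$. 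A strategy where the gradient moment dominates would need to control $\beta\langle g^3\rangle$ by $\sigma^2\langle g\,g_x^2\rangle$ uniformly in all parameters, and you give no mechanism for that. Also, your claim $F_\kappa''(0)<0$ is false. Since $\phi^{-\gamma}(x)=\phi^{\gamma}(x+\pi)$, the map $\gamma\mapsto\mu^\gamma(\cos)$ is odd, so $F_\kappa''(0)=0$. Strict concavity on $[0,\infty)$ only needs $F_\kappa''<0$ on $(0,\infty)$.

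Neither sign step, as you describe it, would go through, because the structural ideas that make them work are missing.

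\emph{Cubic term.} Skewness of $\cos X$ under $\mu^\gamma$ is the wrong quantity. $g$ is a nonlinear monotone function of $\cos x$, so $\langle g^3\rangle$ is not determined by the law of $\cos X$. Nothing in your plan reduces the equilibrium law, which is not von Mises, to a von Mises one. The paper instead reparametrizes by $g$ itself. Setting $C:=\bigl(g(\pi)+g(0)-2g\bigr)/\bigl(g(\pi)-g(0)\bigr)=\cos\theta$ makes $g$ exactly affine in $\cos\theta$. The cubic moment then becomes the centered cubic $\cos\theta$-moment of the pushforward density $\eta$. This is nonnegative for every nonincreasing density on $[0,\pi]$ (Lemma~\ref{lem:monotone-cosine-skewness}). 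Monotonicity of $\eta$ follows from monotonicity of $f(x)\sqrt{(g(x)-g(0))(g(\pi)-g(x))}/\sin x$, where $f$ is the density of $\mu^\gamma$ (Lemma~\ref{lem:geometric-mean}).

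\emph{Gradient term.} A Chebyshev/FKG inequality cannot be applied to the pair $(g,g_x^2)$, since $g_x^2$ vanishes at $0$ and $\pi$ and is not monotone. The paper writes $g_x=Z\sin x$ and moves $\sin^2x$ into the measure. It then needs (a) $Z$ nondecreasing and (b) $\int_0^\pi g\sin^2x\,f\,dx>0$. Part (b) comes from the reflection argument of Lemma~\ref{lem:central-positivity}, which uses that $fZ$ is nonincreasing.

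\emph{Missing shape estimates.} The facts $Z_x\ge0$ and $(fZ)_x\le0$ (Lemma~\ref{lem:Z-bounds}), and also Lemma~\ref{lem:geometric-mean}, rest on a shape lemma for $\phi_x$ on $(0,\pi)$ (Lemma~\ref{lem:u-shape}). That lemma gives concavity of $\phi_x$, proved jointly with monotonicity of $\phi_x/\sin x$ through the Wronskian--curvature system. It also gives the resulting chord bound $|\phi_{xx}|\le\phi_x/\sin x$. Monotonicity of $\phi$ and $v$ from the maximum principle, or two-sided bounds of $v$ by multiples of $\cos$, do not substitute for these estimates.
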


Theorem~\ref{thm:main}
resolves the conjecture of~\cite{CCS}, including in the regime near
the bifurcation that lies outside the reach of the energetic methods
of~\cite{CesaroniCirant}. 

Part~(i) is the main analytical result of the theorem: it proves the
concavity conjecture of~\cite[Rem.~7.4]{CCS}. Part~(ii) is a short
scalar consequence: the strictly concave function
$g_\kappa(\gamma):=F_\kappa(\gamma)-\gamma$ satisfies $g_\kappa(0)=0$,
$g_\kappa'(0)=\kappa/\kappa_c-1$, and $g_\kappa(\gamma)\to-\infty$ as
$\gamma\to\infty$, so it has exactly one positive zero
$\gamma_*(\kappa)$ for $\kappa>\kappa_c$, and this zero tends to $0$
as $\kappa\downarrow\kappa_c$. The deduction is carried out at the
end of Section~\ref{sec:proof}.

We expect the same method, with minor modifications, to
resolve the analogous uniqueness question for the ergodic Kuramoto
mean field game of~\cite{CesaroniCirant}. In their parametrization the
analog of part~(i) would be strict \emph{convexity} of the scalar
self-consistency map throughout the post-bifurcation regime, which
would give uniqueness, up to rotation, of the synchronized stationary
equilibrium for every supercritical interaction strength. The argument
should in fact be shorter in that setting, since the delicate cubic
moment discussed below is absent in the ergodic problem.

The remainder of the paper is devoted to the proof of part~(i).
Passing to the dimensionless variables of Section~\ref{sec:setup}, in
which $\zeta$ rescales $\gamma$ and $v=v^\zeta$ rescales $\phi^\gamma$,
reduces the claim to strict concavity of $A(\zeta):=\mu^\zeta(\cos)$ in
$\zeta>0$, where $\mu^\zeta(dx)=f(x)\,dx$ has density $f\propto e^{-v}$
and $v\in C^\infty(\T)$ solves
\[
-v_{xx}+\lambda v+\tfrac12 v_x^2=-\zeta\cos x,
\qquad\lambda:=2\beta/\sigma^2.
\]
Writing
\[
w:=A/\lambda+v_\zeta,\qquad z:=v_{x\zeta},
\]
Lemma~\ref{lem:second-deriv} expresses the second derivative as
\begin{equation}\label{eq:second-derivative-intro}
A''(\zeta)=-\lambda\int_\T w^3 f\,dx
-3\int_\T wz^2 f\,dx,\qquad \int_\T wf\,dx=0,
\end{equation}
the additive constant $A/\lambda$ in $w$ chosen so that $w$ is
$\mu^\zeta$-centered. Equivalently, $-w=\partial_\zeta\log f^\zeta$ is
the score function of the Gibbs family $\{\mu^\zeta\}$, and the
centering is its standard zero-mean property. Both terms on the right
are then cumulants of $w$ under $\mu^\zeta$: the \emph{cubic moment}
$\int_\T w^3 f\,dx$ (the third cumulant of $w$) and the \emph{gradient
moment} $\int_\T wz^2 f\,dx$ (the covariance of $w$ with $z^2$).
Neither integrand has an obvious sign, and direct integration-by-parts
estimates do not give a sign for either.

Both moments are controlled instead through sharp shape estimates for
$v$ and $v_\zeta$ along $(0,\pi)$. On that interval,
$u:=v_x$ is a positive concave bump that leans toward $\pi$:
in addition to concavity, the quotient $u/\sin x$ is monotone
nondecreasing (Lemma~\ref{lem:u-shape}). The $\zeta$-derivative obeys a
related shape estimate after rescaling: setting
\[
Z:=\frac{z}{\sin x},
\]
the quotient satisfies $0\leq Z_x\leq uZ$ on $(0,\pi)$
(Lemma~\ref{lem:Z-bounds}). The natural
target is concavity of $u$, but a maximum-principle attack on $u_{xx}$
fails: the equation it satisfies has a zero-order term of uncontrolled
sign. The key step in Lemma~\ref{lem:u-shape} is to pair
concavity with the less obvious monotonicity of $u/\sin x$ and prove
both simultaneously with a bootstrapping argument via an auxiliary coupled ODE system.
The two quantities tracked are the Wronskian $\mathcal W$ of $u$ against the
forcing mode $\sin x$ (encoding monotonicity of $u/\sin x$) and the
normalized curvature $\mathcal K$ defined by $u_{xx}=-u\mathcal K$ (encoding concavity
of $u$). Because the forcing is the first Dirichlet mode, $\mathcal W$ and
$\mathcal K$ close into a first-order system.

These shape estimates reduce each moment
in~\eqref{eq:second-derivative-intro} to a one-dimensional inequality,
by substantially different arguments. The non-negativity of the cubic moment is the more difficult estimate and requires further insight beyond the shape information. An algebraic substitution rewrites the integrand as
a cubic in a single variable: writing $a:=v_\zeta$, $\Delta_0(x):=a(x)-a(0)$,
$\Delta_\pi(x):=a(\pi)-a(x)$, and $\Delta:=a(\pi)-a(0)$, the centered quantity
$A/\lambda+a$ is affine in $C:=(\Delta_\pi-\Delta_0)/\Delta$, and setting
$C=\cos\theta$ pushes $f$ forward to a density $\eta$ on $[0,\pi]$
under which the cubic moment becomes the centered cubic
$\cos\theta$-moment of $\eta$. Two independent facts then give it a sign:
$\eta$ is nonincreasing (Proposition~\ref{prop:cubic}), and any
nonincreasing density on $[0,\pi]$ has nonnegative centered cubic
$\cos$-moment, by a symmetrization argument across the median
(Lemma~\ref{lem:monotone-cosine-skewness}). Monotonicity of $\eta$ is
the substantive step. It does not follow directly from the shape
estimates, and is equivalent to a pointwise inequality on
$(0,\pi)$ that mixes $u$, the curvature of $C$, and $C$ itself. We
deduce it from a strictly stronger pointwise statement that may be of
independent interest: the geometric mean
\[
H(x):=e^{-v(x)}\,\frac{\sqrt{\Delta_0(x)\,\Delta_\pi(x)}}{\sin x}
\]
of the one-sided increments of $a$, normalized by $\sin x$ and
weighted by the Gibbs factor, is itself nonincreasing on $(0,\pi)$
(Lemma~\ref{lem:geometric-mean}).
The proof of this geometric-mean monotonicity rests on sharp integral
comparisons for $\Delta_0,\Delta_\pi$ that simultaneously exploit the concavity and
lean of $u$ from Lemma~\ref{lem:u-shape} and the bounds on $Z$ from
Lemma~\ref{lem:Z-bounds}. For the gradient
moment, reflection across $x=\pi/2$ proves a central-positivity
statement for $w$ (Lemma~\ref{lem:central-positivity}): under the
equilibrium $f$, $w$ has positive integral against every test weight
 symmetric about $\pi/2$ and nondecreasing on $[0,\pi/2]$. With
$w$ and $Z^2$ nondecreasing on $(0,\pi)$, applying this central
positivity together with a correlation
inequality yields a sign on the full gradient moment
(Proposition~\ref{prop:gradient}).

\section{Setting and dimensionless reduction}\label{sec:setup}

To avoid carrying $\beta$ and $\sigma$ explicitly through the proof,
we work in dimensionless variables. Setting
\[
\lambda:=\frac{2\beta}{\sigma^2},\qquad
\zeta:=\frac{4\gamma}{\sigma^4},\qquad
v^\zeta:=\frac{2}{\sigma^2}\phi^\gamma ,
\]
the HJB equation~\eqref{eq:HJB-intro} becomes
\begin{equation}\label{eq:HJB}
-v^\zeta_{xx}+\lambda v^\zeta+\tfrac12(v^\zeta_x)^2=-\zeta\cos x,
\qquad x\in\T,
\end{equation}
and the law $\mu^\gamma$ has density proportional to $e^{-v^\zeta}$. We will denote this measure by $\mu^\zeta$ in the new variable (the precise normalization is given in~\eqref{eq:mu} below). Thus
\[
F_\kappa(\gamma)=\kappa\,A\!\left(\frac{4\gamma}{\sigma^4}\right),
\qquad A(\zeta):=\mu^\zeta(\cos).
\] Equivalently, in the $\zeta$ coordinate the
fixed-point equation $\gamma=F_\kappa(\gamma)$ becomes
\[
\zeta=\frac{4\kappa}{\sigma^4}A(\zeta).
\]
The map $\gamma\mapsto\zeta$ is a positive linear rescaling, so strict
concavity of the original self-consistency map $F_\kappa$ in $\gamma$
is equivalent to strict concavity of $A$ in $\zeta$.

For the rest of the paper we fix $\lambda>0$. For completeness, we briefly establish existence and uniqueness of smooth solutions to the HJB equation, together with smooth dependence on the parameter $\zeta$. We write $C^{k,\alpha}(\T)$ for the Banach space of $C^{k,\alpha}$ functions on the torus $\T$, equivalently $2\pi$-periodic $C^{k,\alpha}$ functions on $\mathbb R$, with the usual H\"older norm on one period.

\begin{proposition}[Well-posedness and smooth dependence]\label{prop:smooth-zeta}
For every $\zeta\geq 0$, equation~\eqref{eq:HJB} has a unique solution
$v^\zeta\in C^\infty(\T)$. This solution is even with respect to both
$0$ and $\pi$, and every partial derivative $\partial_\zeta^j\partial_x^k v^\zeta$ is
even with respect to $0$ and $\pi$ when $k$ is even, and odd with respect to $0$ and $\pi$ when $k$
is odd. Moreover, for every $k\geq 0$ and $\alpha\in(0,1)$, the branch
$\zeta\mapsto v^\zeta$ is $C^\infty$ as a map into
$C^{k,\alpha}(\T)$.
\end{proposition}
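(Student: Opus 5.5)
Our approach is the Hopf--Cole transform combined with the implicit function theorem. First, existence: the substitution $v=-2\log\psi$ turns the quadratic gradient term into a linear one, since $v_x=-2\psi_x/\psi$ and $v_{xx}=-2\psi_{xx}/\psi+2\psi_x^2/\psi^2$. Equation~\eqref{eq:HJB} becomes
\[
2\psi_{xx}-2\lambda\psi\log\psi=-\zeta\cos x\,\psi .
\]
This is still nonlinear because of the $\psi\log\psi$ term. We therefore do not rely on it, and prove existence directly by the method of sub- and supersolutions, or equivalently by Leray--Schauder continuation in $\zeta$.

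The constants $\underline v=-\zeta/\lambda$ and $\overline v=\zeta/\lambda$ are a sub- and a supersolution, because $\lambda\overline v=\zeta\geq-\zeta\cos x$ and symmetrically for $\underline v$. The operator $v\mapsto -v_{xx}+\lambda v+\tfrac12 v_x^2$ satisfies the comparison principle on $\T$. At an interior maximum of $v_1-v_2$ the gradients agree and the second derivatives compare, so $\lambda(v_1-v_2)\leq 0$ there. This gives uniqueness among $C^2$ solutions and the a priori bound $\|v\|_\infty\leq\zeta/\lambda$. A gradient bound follows from Bernstein's method applied to $|v_x|^2$, or more simply in one dimension: at a maximum of $v_x^2$ we have $v_{xx}=0$, so the equation gives $\tfrac12 v_x^2\leq \zeta+\lambda\|v\|_\infty\leq 2\zeta$. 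The equation then bounds $v_{xx}$, and bootstrapping yields $C^{k,\alpha}$ bounds for all $k$. Existence follows from the continuity method in $\zeta$ starting at $v^0=0$, or from monotone iteration between the barriers. Smoothness follows by differentiating the equation.

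Next, the symmetries. Both $x\mapsto v(-x)$ and $x\mapsto v(2\pi-x)$ solve the same equation because $\cos$ is even about $0$ and about $\pi$. Uniqueness then makes $v$ even about $0$ and $\pi$. The parity claims for $\partial_\zeta^j\partial_x^k v$ follow because $\zeta$-differentiation preserves parity in $x$, while each $x$-derivative flips it.

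For smooth dependence on $\zeta$, we apply the implicit function theorem to
\[
G(\zeta,v):=-v_{xx}+\lambda v+\tfrac12 v_x^2+\zeta\cos x,\qquad G:\mathbb R\times C^{k+2,\alpha}(\T)\to C^{k,\alpha}(\T).
\]
The map $G$ is polynomial in $(v,v_x,v_{xx},\zeta)$, hence $C^\infty$. Its linearization in $v$ is
\[
L h=-h_{xx}+v_x h_x+\lambda h .
\]
We need $L$ to be an isomorphism $C^{k+2,\alpha}\to C^{k,\alpha}$, and this is the main point to verify. Injectivity follows from the maximum principle: at a positive maximum of $h$ we have $h_x=0$ and $h_{xx}\leq0$, so $\lambda h\leq 0$, and the same argument applies at a negative minimum. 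Surjectivity follows from Fredholm theory. The operator $L$ is a compact perturbation of $-\partial_{xx}+\lambda$ with smooth coefficients, so it has index zero, and Schauder estimates give the Hölder regularity of the solution.

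The implicit function theorem then yields a local $C^\infty$ branch in each $C^{k+2,\alpha}$ (and hence in each $C^{k,\alpha}$). Uniqueness identifies this local branch with $v^\zeta$, so the local branches patch together into a global $C^\infty$ map on $[0,\infty)$, one-sided at $\zeta=0$ or extended to a neighborhood of $0$.
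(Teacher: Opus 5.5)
Your proposal is correct and follows essentially the same route as the paper: maximum-principle a priori bounds feeding a continuity argument in $\zeta$ for existence, comparison for uniqueness, reflection plus uniqueness for the symmetries, and the implicit function theorem for smooth dependence, with a linearization that is Fredholm of index zero and injective by the maximum principle. The differences are cosmetic. You obtain the gradient bound at a maximum of $v_x^2$, whereas the paper applies the maximum principle to the differentiated equation for $u=v_x$. You also set up the implicit function theorem directly as a map $C^{k+2,\alpha}\to C^{k,\alpha}$, whereas the paper works in $C^{2,\alpha}$ and then bootstraps.
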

\begin{proof}
We first prove existence on bounded parameter intervals. Fix
$M>0$ and $\alpha\in(0,1)$, and let $\mathcal I_M\subset[0,M]$ be the set
of $\zeta$ for which~\eqref{eq:HJB} admits a solution in
$C^{2,\alpha}(\T)$. Since $v\equiv0$ solves~\eqref{eq:HJB} at
$\zeta=0$, the set $\mathcal I_M$ is nonempty.

We record the local continuation statement that will be used both for
openness and for smooth dependence. Suppose that $v$ solves
\eqref{eq:HJB} at $\zeta_0\in\mathcal I_M$, and define
\[
\mathcal F(v,\zeta):=-v_{xx}+\lambda v+\frac12 v_x^2+\zeta\cos x
\]
as a map from
$C^{2,\alpha}(\T)\times\mathbb R$ to
$C^{0,\alpha}(\T)$. The derivative in the $v$ variable is
\[
D_v\mathcal F(v,\zeta_0)h=-h_{xx}+v_xh_x+\lambda h.
\]
The maximum principle gives a trivial kernel, since $\lambda>0$. This
operator is Fredholm of index zero: if $L_0=-\partial_{xx}+1$, then
\[
D_v\mathcal F(v,\zeta_0)L_0^{-1}=I+K,\qquad
K=(v_x\partial_x+\lambda-1)L_0^{-1},
\]
and $K$ is compact on $C^{0,\alpha}(\T)$ by the compact
embedding $C^{1,\alpha}(\T)\hookrightarrow
C^{0,\alpha}(\T)$. Thus $D_v\mathcal F(v,\zeta_0)$ is an
isomorphism by the Fredholm alternative~\cite[Thm.~5.3]{GT}. The
implicit function theorem gives neighborhoods $U$ of $v$ and $J$ of
$\zeta_0$, and a $C^\infty$ map
$\zeta\in J\mapsto \tilde v^\zeta\in U$, such that
$\mathcal F(\tilde v^\zeta,\zeta)=0$. In particular,
$J\cap[0,M]\subset\mathcal I_M$, so $\mathcal I_M$ is open.

We show that $\mathcal I_M$ is closed. Let
$\zeta_n\in\mathcal I_M$ with $\zeta_n\to\zeta\in[0,M]$, and let $v_n$
be corresponding solutions. The maximum principle applied to~\eqref{eq:HJB} gives
\[
\|v_n\|_{L^\infty}\leq \frac M\lambda .
\]
Writing $u_n:=v_{n,x}$ and differentiating~\eqref{eq:HJB}, we have
\[
-u_{n,xx}+\lambda u_n+u_nu_{n,x}=\zeta_n\sin x,
\]
and the maximum principle applied to $u_n$ gives
$\|u_n\|_{L^\infty}\leq M/\lambda$. Returning to~\eqref{eq:HJB},
\begin{equation}\label{eq:vnxx}
v_{n,xx}=\lambda v_n+\frac12 u_n^2+\zeta_n\cos x
\end{equation}
is uniformly bounded. Hence $(v_n)$ is precompact in $C^1(\T)$. Along
a subsequence, $v_n\to v$ and $v_{n,x}\to v_x$ uniformly. Equation~\eqref{eq:vnxx}
then gives uniform convergence of $v_{n,xx}$, and
the limit $v\in C^2(\T)$ solves~\eqref{eq:HJB} at the parameter
$\zeta$. Schauder estimates upgrade $v$ to $C^{2,\alpha}(\T)$,
so $\zeta\in\mathcal I_M$. Thus
$\mathcal I_M=[0,M]$. Since $M$ was arbitrary, solutions exist for every
$\zeta\geq0$.

Uniqueness follows from the maximum principle. If $v_1$ and $v_2$ solve
\eqref{eq:HJB} at the same $\zeta$, then $\psi:=v_1-v_2$ satisfies
\[
-\psi_{xx}+\frac12(v_{1,x}+v_{2,x})\psi_x+\lambda\psi=0,
\]
so $\psi\equiv0$. Since~\eqref{eq:HJB} is invariant under $x\mapsto-x$,
uniqueness gives $v^\zeta(-x)=v^\zeta(x)$. Combining this with
periodicity gives
$v^\zeta(\pi+y)=v^\zeta(-\pi-y)=v^\zeta(\pi-y)$, so $v^\zeta$ is also
even about $\pi$. The stated parities of the mixed partials follow by
differentiating the identities $v^\zeta(-x)=v^\zeta(x)$ and
$v^\zeta(\pi+y)=v^\zeta(\pi-y)$ in $\zeta$ and $x$.

It remains only to identify the regularity of the global branch. The
local continuation statement above applies at every parameter
$\zeta_0\geq0$, now with $v=v^{\zeta_0}$. By uniqueness, the local
branch $\tilde v^\zeta$ agrees with the globally defined solution
$v^\zeta$ wherever both are defined. Hence
$\zeta\mapsto v^\zeta$ is $C^\infty$ as a map into
$C^{2,\alpha}(\T)$. Bootstrapping upgrades this to smoothness
into each $C^{k,\alpha}(\T)$.
\end{proof}

The stationary equilibrium measure $\mu^\zeta\in\mathcal P(\T)$ is defined
by
\begin{equation}\label{eq:mu}
\mu^\zeta(dx)=f^\zeta(x)\,dx,
\qquad
f^\zeta(x):=\frac{e^{-v^\zeta(x)}}{\mathcal Z^\zeta},
\qquad
\mathcal Z^\zeta:=\int_\T e^{-v^\zeta(y)}\,dy.
\end{equation}
The first cosine moment of $\mu^\zeta$ defines
\begin{equation}\label{eq:A}
A(\zeta):=\mu^\zeta(\cos)=\int_\T \cos x\,f^\zeta(x)\,dx.
\end{equation}
We integrate over $\T$ with respect to $dx$ throughout and pass freely
between $[0,\pi]$ and $\T$ using evenness of $v^\zeta$ and $f^\zeta$ about
$0$.

A first-order computation in $\zeta$ identifies the bifurcation
threshold. At $\zeta=0$ the unique solution of~\eqref{eq:HJB} is
$v^0\equiv 0$, so by Proposition~\ref{prop:smooth-zeta} the function
$w:=\partial_\zeta v^\zeta|_{\zeta=0}\in C^\infty(\T)$ is well-defined.
Differentiating~\eqref{eq:HJB} in $\zeta$ at $\zeta=0$ gives
\[
-w_{xx}+\lambda w=-\cos x,
\qquad\text{so}\qquad w(x)=-\frac{\cos x}{\lambda+1}.
\]
Differentiating~\eqref{eq:A} at $\zeta=0$ with $f^0\equiv 1/(2\pi)$,
the contribution from $\partial_\zeta\mathcal Z^\zeta$ vanishes since
$\int_\T\cos x\,dx=0$, and
\[
A'(0)=-\int_\T \cos x\,w(x)\,\frac{dx}{2\pi}
=\frac{1}{2\pi(\lambda+1)}\int_\T\cos^2 x\,dx
=\frac{1}{2(\lambda+1)}.
\]
Translating back via $F_\kappa(\gamma)=\kappa A(4\gamma/\sigma^4)$ and
$\kappa_c=\sigma^4(\lambda+1)/2$,
\begin{equation}\label{eq:F-properties}
F_\kappa(0)=0,\qquad F_\kappa'(0)=\frac{\kappa}{\kappa_c},\qquad |F_\kappa|\leq\kappa,
\end{equation}
where the last bound follows from $|\cos|\leq 1$.

In dimensionless variables, Theorem~\ref{thm:main}(i) is the statement
that $A''(\zeta)<0$ for every $\lambda>0$ and every $\zeta>0$,
which we prove in Sections~\ref{sec:prelim}
and~\ref{sec:proof}.

\section{Shape estimates}\label{sec:prelim}

Throughout this section we fix $\lambda>0$ and $\zeta>0$ and write
$v=v^\zeta$, $u=v_x=v^\zeta_x$, $f=f^\zeta$,
$\mathcal Z=\mathcal Z^\zeta$.

The first lemma is the quantitative shape statement for $u$ on
$(0,\pi)$. Geometrically, $u$ is a positive concave bump that
leans toward $\pi$, and its sine-rescaling $u/\sin x$ is monotone. Beyond the four shape inequalities, the lemma also
records the monotonicity $\mathcal B_x\leq 0$ of the combination
$\mathcal B:=1+\lambda+u_x+u\cot x$. This $\mathcal B$ arises as the
zero-order coefficient of an elliptic equation for
$v_{x\zeta}/\sin x$ in Lemma~\ref{lem:Z-bounds}, and the monotonicity
$\mathcal B_x\leq 0$ is precisely what allows us to bound that
quotient there.
\begin{lemma}[Shape of $u$]\label{lem:u-shape}
On $(0,\pi)$,
\begin{equation}\label{eq:shape}
u>0,\qquad u_{xx}\leq 0,\qquad
\Bigl(\frac{u}{\sin x}\Bigr)_x\geq 0,\qquad
|u_x|\leq\frac{u}{\sin x}.
\end{equation}
Moreover, with $\mathcal B:=1+\lambda+u_x+u\cot x$, $\mathcal B_x\leq 0$ on
$(0,\pi)$.
\end{lemma}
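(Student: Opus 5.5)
Our plan is to work with the differentiated equation
\[
-u_{xx}+\lambda u+uu_x=\zeta\sin x
\]
on $[0,\pi]$. By Proposition~\ref{prop:smooth-zeta}, $u$ is odd about $0$ and $\pi$, so $u(0)=u(\pi)=0$ and $u_{xx}(0)=u_{xx}(\pi)=0$.

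\emph{Positivity.} Suppose $u$ had a nonpositive interior minimum on $(0,\pi)$. At that point $u_x=0$ and $u_{xx}\ge0$, so $\lambda u\ge\zeta\sin x>0$, which is a contradiction. Hence $u\ge0$. Strict positivity then follows from the strong maximum principle, or from the same computation at a zero.

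\emph{The two auxiliary quantities.} Following the strategy outlined in the introduction, set
\[
\mathcal W:=u_x\sin x-u\cos x=\sin^2x\,(u/\sin x)_x,
\qquad
\mathcal K:=-u_{xx}/u .
\]
Note that $\mathcal K$ is well defined on $(0,\pi)$ since $u>0$, and it extends continuously to the endpoints by oddness. We will compute
\[
\mathcal W_x=(u_{xx}+u)\sin x=u(1-\mathcal K)\sin x .
\]
We will also derive an equation for $\mathcal K$ by differentiating the ODE twice. The $\zeta\sin x$ forcing is annihilated by $\partial_{xx}+1$, so $u_{xx}+u$ satisfies
\[
-(u_{xx}+u)_{xx}+\lambda(u_{xx}+u)+(uu_x)_{xx}+uu_x=0 .
\]
Expanding $(uu_x)_{xx}=uu_{xxx}+3u_xu_{xx}$ and substituting $u_{xx}=-\mathcal K u$ expresses everything in terms of $u$, $u_x$, $\mathcal K$, $\mathcal K_x$ and $\mathcal K_{xx}$. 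The hope is that, after using the first-order relation through $\mathcal W$, the zero-order term of uncontrolled sign, proportional to $u_x$, is rewritten in terms of $\mathcal W$.

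\emph{Bootstrap.} We aim to show simultaneously that $\mathcal K\ge0$ and $\mathcal W\ge0$ on $(0,\pi)$. Concretely, we would run a continuity argument in $\zeta$ starting from $\zeta=0^+$. There $u\approx\zeta\sin x/(\lambda+1)$, so $\mathcal K\equiv1$ and $\mathcal W\equiv0$ to leading order, and the next-order correction will be computed to check the correct signs. Alternatively, we could argue at a first interior point where one of the two inequalities fails. At such a point:
\begin{itemize}
\item if $\mathcal W$ attains a negative minimum, the relation $\mathcal W_x=u(1-\mathcal K)\sin x$ forces $\mathcal K=1$, and the second-order information contradicts the $\mathcal K$-equation;
\item if $\mathcal K$ attains a negative minimum, the $\mathcal K$-equation with $\mathcal W\ge0$ gives a contradiction.
\end{itemize}
Boundary behaviour at $0$ and $\pi$ will be handled by Taylor expansion using oddness. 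We expect this coupled maximum-principle step to be the main obstacle. In particular, it must be checked that the cross term in the $\mathcal K$-equation has a sign that $\mathcal W\ge0$ controls, and that degenerate touching cases do not break the argument.

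\emph{Consequences.} Once concavity and $\mathcal W\ge0$ hold, $u_x\ge u\cot x\ge -u/\sin x$. For the upper bound $u_x\le u/\sin x$, concavity with $u(0)=0$ gives $u_x\le u/x$. On $(0,\pi/2]$ this is at most $u/\sin x$ since $\sin x\le x$. On $[\pi/2,\pi)$ we have $u_x\le0$ by concavity and the lean, or it can be bounded directly via $u(\pi)=0$.

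Finally, we compute
\[
\mathcal B_x=u_{xx}+u_x\cot x-u/\sin^2x=-\mathcal Ku+\mathcal W\cos x/\sin^2 x-u_x\cot x\cdot 0 ,
\]
after simplification, or more precisely
\[
\mathcal B_x=u_{xx}-\mathcal W/\sin^2x\cdot1+\ldots .
\]
We will simplify this identity carefully: we expect
\[
\mathcal B_x=u_{xx}-\frac{u-u_x\sin x\cos x}{\sin^2x},
\]
whose sign follows from $u_{xx}\le0$ together with the bound $|u_x|\le u/\sin x$. This gives $\mathcal B_x\le0$.
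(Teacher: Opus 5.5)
Your skeleton matches the paper's: the same $\mathcal W$, the same $\mathcal K$, the same identity for $\mathcal W_x$, and the same final computation of $\mathcal B_x$. But the step that carries the lemma is missing.

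You look for an equation for $\mathcal K$ by differentiating the ODE twice. That produces a second-order equation containing exactly the zero-order term of uncontrolled sign that defeats a direct maximum-principle attack on $u_{xx}$. The proposal only hopes this term can be rewritten through $\mathcal W$. No further differentiation is needed: the ODE itself gives $\mathcal K=-u_{xx}/u=\zeta\sin x/u-\lambda-u_x$. One differentiation, using $(\sin x/u)_x=-\mathcal W/u^2$, then closes the system at first order:
\[
\mathcal W_x=\sin x\,u\,(1-\mathcal K),\qquad
\mathcal K_x=u\mathcal K-\frac{\zeta\,\mathcal W}{u^2}.
\]
Without the second identity your bullet points have no content. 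At a negative interior minimum of $\mathcal W$ (which exists if $\mathcal W<0$ somewhere, since $\mathcal W(0)=\mathcal W(\pi)=0$) you know $\mathcal K=1$. The contradiction needs the sign of $\mathcal K_x$ there, namely $\mathcal K_x=u-\zeta\mathcal W/u^2>0$, whence $\mathcal W_{xx}=-\sin x\,u\,\mathcal K_x<0$. This proves $\mathcal W\ge0$ outright, with no hypothesis on $\mathcal K$, so neither a simultaneous bootstrap nor continuation in $\zeta$ is needed. Continuation would not help anyway: at leading order $\mathcal W\equiv0$ and $\mathcal K\equiv1$ are borderline, so you would need exactly this local mechanism to keep the signs.

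The step $\mathcal K\ge0$ also has a gap as stated, because a negative infimum of $\mathcal K$ need not be attained in the interior. Your generic appeal to Taylor expansion at both endpoints hides an asymmetry.
\begin{itemize}
\item At $0$, writing $u=a_0x+b_0x^3+\cdots$, the constraint $\mathcal W\ge0$ gives only $\mathcal K(0^+)\le1$, which is no lower bound.
\item At $\pi$, with $\xi=\pi-x$ and $u=a\xi+b\xi^3+\cdots$, one shows $a>0$ and $6b=\lambda a-a^2-\zeta$. Then $\mathcal W\ge0$ near $\pi$ forces $\mathcal K(\pi^-)\ge1$.
\end{itemize}
The paper combines this with the first-order equation. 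At an interior zero of $\mathcal K$, $\mathcal K_x=-\zeta\mathcal W/u^2<0$. The inequality is strict because $\mathcal W=0$ there would be an interior minimum of $\mathcal W$, while $\mathcal W_x=\sin x\,u>0$. So $\mathcal K$ crosses zero only downward, and given $\mathcal K(\pi^-)\ge1$ it cannot be negative anywhere.

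Some smaller points:
\begin{itemize}
\item Your claim that $u_x\le0$ on $[\pi/2,\pi)$ is false in general. Indeed $\mathcal W\ge0$ gives $u_x(\pi/2)\ge0$, and for small $\zeta$ it is strictly positive; this is the lean toward $\pi$. The claim is also unnecessary, since $u_x\le u/x\le u/\sin x$ on all of $(0,\pi)$.
\item Your lower bound $u_x\ge u\cot x\ge-u/\sin x$ from $\mathcal W\ge0$ is a valid alternative to the paper's chord argument at $\pi$.
\item Your final expression for $\mathcal B_x$ and its sign argument agree with the paper, although the displayed line before it is garbled.
\end{itemize}
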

\begin{proof}
Since $v$ is even about $0$ and $\pi$, $u=v_x$ is odd about both
endpoints, and in particular $u(0)=u(\pi)=0$. Differentiating~\eqref{eq:HJB} in $x$ gives
\begin{equation}\label{eq:u-eq}
-u_{xx}+\lambda u+uu_x=\zeta\sin x.
\end{equation}

We first show $u>0$ on $(0,\pi)$. Writing~\eqref{eq:u-eq} as
$Lu=\zeta\sin x$ for the linear operator
$Lw:=-w_{xx}+uw_x+\lambda w$, we have $L(0)=0<\zeta\sin x$ on $(0,\pi)$
and $u$ vanishes at both endpoints, so the strong comparison principle
gives $u>0$ on $(0,\pi)$.

We turn to $u_{xx}\leq 0$ and $(u/\sin x)_x\geq 0$, which we prove
simultaneously by tracking two quantities that record different aspects
of the shape of $u$.
The first,
\begin{equation}\label{eq:Kdef}
\mathcal K:=\frac{\zeta\sin x}{u}-\lambda-u_x ,
\end{equation}
is~\eqref{eq:u-eq} rearranged: $u_{xx}=-u\mathcal K$, so $u_{xx}\leq 0$ is
equivalent to $\mathcal K\geq 0$. The second,
\begin{equation}\label{eq:Wdef}
\mathcal W:=\sin x\,u_x-\cos x\,u=\sin^2x\,\bigl(u/\sin x\bigr)_x ,
\end{equation}
is the Wronskian of $u$ against the forcing mode $\sin x$, non-negative
precisely when $u/\sin x$ is nondecreasing.
Differentiating~\eqref{eq:Wdef} and~\eqref{eq:Kdef} and substituting
$u_{xx}=-u\mathcal K$ in both yields the coupled system
\begin{subequations}\label{eq:WK}
\begin{align}
\mathcal W_x &= \sin x\,u\,(1-\mathcal K), \label{eq:Wx}\\
\mathcal K_x &= u\mathcal K-\frac{\zeta\mathcal W}{u^2} . \label{eq:Kx}
\end{align}
\end{subequations}

Geometrically, $\mathcal K$ is the normalized curvature of $u$: the forcing
mode $\sin x$ has unit normalized curvature ($(\sin x)_{xx}=-\sin x$),
and $\mathcal K\geq 0$ records concavity of $u$. Equation~\eqref{eq:Wx} is
the standard identity for the derivative of the Wronskian, and shows that the relative slope
of $u$ against $\sin x$ moves with the curvature difference $1-\mathcal K$.
Equation~\eqref{eq:Kx} follows from $(\sin x/u)_x=-\mathcal W/u^2$, and the
system closes because the forcing is the first Dirichlet mode.

We first show $\mathcal W\geq 0$ on $(0,\pi)$. Otherwise $\mathcal W$ has a negative
interior minimum at some $x_0$. There $\mathcal W_x(x_0)=0$, so~\eqref{eq:Wx}
forces $\mathcal K(x_0)=1$. Then~\eqref{eq:Kx} gives
$\mathcal K_x(x_0)=u(x_0)-\zeta\mathcal W(x_0)/u(x_0)^2>0$.
Differentiating~\eqref{eq:Wx} once more at $x_0$,
\[
\mathcal W_{xx}(x_0)=-\sin x_0\,u(x_0)\,\mathcal K_x(x_0)<0 ,
\]
contradicting that $x_0$ is a minimum.

Next we show $\mathcal K\geq 0$ on $(0,\pi)$. At an interior zero of $\mathcal K$,
\eqref{eq:Kx} gives $\mathcal K_x=-\zeta\mathcal W/u^2\leq 0$, with equality only if
$\mathcal W=\mathcal K=0$ simultaneously. But since $\mathcal W\geq 0$ on $(0,\pi)$, an interior
point with $\mathcal W=0$ is a minimum of $\mathcal W$ and so satisfies $\mathcal W_x=0$,
while~\eqref{eq:Wx} forces $\mathcal W_x=\sin x\,u>0$ there. Hence $\mathcal K$ can cross
zero only downward.

To rule out any such crossing, we compute the limit of $\mathcal K$ at $\pi$.
Set $\xi:=\pi-x$ and expand $u=a\xi+b\xi^3+o(\xi^3)$, where the
absence of even-order terms reflects oddness of $u$ about $\pi$ and
$a:=-u_x(\pi)$. Substituting into~\eqref{eq:u-eq} gives the $\xi^1$
balance
\begin{equation}\label{eq:ab}
6b=\lambda a-a^2-\zeta .
\end{equation}
In particular $a>0$: if $a=0$ then~\eqref{eq:ab} gives $b=-\zeta/6<0$,
forcing $u\sim-\zeta\xi^3/6<0$ near $\pi$ and contradicting $u>0$ on
$(0,\pi)$. L'H\^opital's rule then gives $\sin x/u\to 1/a$ as $x\to\pi^-$, so
\[
\mathcal K(\pi^-)=\frac{\zeta}{a}+a-\lambda=\frac{a^2-\lambda a+\zeta}{a},
\]
and a direct computation using~\eqref{eq:ab} yields
\[
\mathcal W=\frac{a^2-(\lambda+1)a+\zeta}{3}\,\xi^3+o(\xi^3).
\]
The bound $\mathcal W\geq 0$ near $\pi$ forces $a^2-(\lambda+1)a+\zeta\geq 0$,
equivalently $\mathcal K(\pi^-)\geq 1>0$. By continuity $\mathcal K>0$ on a one-sided
neighborhood of $\pi$, so any $\mathcal K(x_*)<0$ at $x_*\in(0,\pi)$ would
require an upward crossing of zero on $[x_*,\pi)$, contradicting the
downward-only crossing property. Hence $\mathcal K\geq 0$ on $(0,\pi)$, i.e.\
$u_{xx}\leq 0$.

It remains to prove the chord bound and $\mathcal B_x\leq 0$.
Concavity of $u$ together with $u(0)=0$ gives $u_x\leq u/x$ wherever
$u_x>0$, and similarly $-u_x\leq u/(\pi-x)$ wherever $u_x<0$. Since
$\sin x\leq\min(x,\pi-x)$ on $(0,\pi)$,
\begin{equation}\label{eq:chord}
|u_x|\leq\frac{u}{\sin x} .
\end{equation}
For $\mathcal B_x\leq 0$, differentiate directly:
\[
\mathcal B_x=u_{xx}+\frac{u_x\sin x\cos x-u}{\sin^2x}.
\]
The first summand is $\leq 0$ by $u_{xx}\leq 0$, and the numerator of
the second satisfies $u_x\sin x\cos x\leq|u_x|\sin x\leq u$ by
$|\cos x|\leq 1$ and~\eqref{eq:chord}, so the second summand is also
$\leq 0$.
\end{proof}

The next lemma proves derivative bounds for the rescaled quotient
$Z$, used later in the change-of-variables argument for the
cubic moment (Proposition~\ref{prop:cubic}) and in the
reflection-and-correlation argument for the gradient moment
(Proposition~\ref{prop:gradient}).
Rescaling by $\sin x$ is natural because both $z$ and $\sin x$ are
smooth and odd about each endpoint of $[0,\pi]$, so $Z$ extends
smoothly to the closed interval. The proof is a chain of comparison
arguments, first for $z$, then for the flux $f\sin^2x\,Z_x$, and
finally for the flux $-(fZ)_x$.

\begin{lemma}[Bounds for $Z$]\label{lem:Z-bounds}
Let
\[
z:=v_{x\zeta},\qquad Z:=\frac{z}{\sin x}.
\]
Then $z>0$ and $Z>0$ on $(0,\pi)$, and
\[
0\leq Z_x\leq uZ\qquad\text{on }(0,\pi).
\]
The upper bound is equivalent to $(fZ)_x\leq 0$.
\end{lemma}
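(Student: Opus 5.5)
The plan is to work from the linear equation satisfied by $z$ and, after rescaling, by $Z$, and then to run comparison arguments in the order the statement suggests: first $z>0$, then the flux $Q:=f\sin^2x\,Z_x$, then the flux $-(fZ)_x$.

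\textbf{Equations for $z$ and $Z$.} Differentiating~\eqref{eq:u-eq} in $\zeta$ gives
\[
-z_{xx}+uz_x+(\lambda+u_x)z=\sin x .
\]
By Proposition~\ref{prop:smooth-zeta}, $z$ is odd about $0$ and $\pi$, so $z(0)=z(\pi)=0$. Substituting $z=Z\sin x$ and dividing by $\sin x$, a direct computation gives
\[
-Z_{xx}+(u-2\cot x)Z_x+\mathcal B Z=1 ,
\]
where $\mathcal B=1+\lambda+u_x+u\cot x$ is the quantity from Lemma~\ref{lem:u-shape}. Since $f_x=-uf$, this is equivalent to the divergence form
\[
Q_x=f\sin^2x\,(\mathcal B Z-1),\qquad Q:=f\sin^2x\,Z_x,\qquad Q(0)=Q(\pi)=0 .
\]
The equivalence claimed in the statement is immediate: $(fZ)_x=f(Z_x-uZ)$, so $Z_x\le uZ$ is the same as $(fZ)_x\le0$.

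\textbf{Step 1: positivity.} The zero-order coefficient $\lambda+u_x$ has no sign, so I would avoid it by working with $Z$. At a nonpositive interior minimum of $Z$ we have $Z_x=0$ and $Z_{xx}\ge0$, so the $Z$-equation forces $\mathcal B Z\ge1$, hence $\mathcal B<0$ there. Because $\mathcal B_x\le0$ by Lemma~\ref{lem:u-shape}, the set $\{\mathcal B<0\}$ is an interval $(x_1,\pi)$.

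On $(0,x_1]$, where $\mathcal B\ge0$, the maximum principle excludes a nonpositive minimum. Near $\pi$ I would first check positivity of $Z(\pi)$ from the boundary expansion. That expansion gives $\mathcal B(\pi)=1+\lambda-2a$ and, from the ODE at $\pi$, $Z(\pi)=1/\mathcal B(\pi)$ when this is nonzero. Positivity on the whole of $(x_1,\pi)$ is the delicate case, and I would try the Sturm-type comparison $z>0$ against the positive function $u$ (or $u_\zeta$-monotonicity via comparison in $\zeta$) if the direct route fails.

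\textbf{Step 2: $Z_x\ge0$, i.e.\ $Q\ge0$.} Since $\mathcal B$ is nonincreasing, the sign of $\mathcal B Z-1$ should change at most once, from positive to negative. The mechanism I would exploit is this: at any point where $Q_x=0$ we have $\mathcal B Z=1$, and $(\mathcal B Z)_x=\mathcal B_x Z+\mathcal B Z_x$. Wherever $Z_x\le0$, this is $\le0$ because $\mathcal B_x\le0$, $Z>0$, and $\mathcal B=1/Z>0$ at such a point. So $\mathcal BZ-1$ can only cross zero downward while $Z_x\le0$.

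Combined with $Q(0)=Q(\pi)=0$, I would argue as follows. $Q$ first increases and then decreases; if $Q$ ever became negative it would have to return to $0$ at $\pi$ while increasing, which is incompatible with a single downward crossing. This gives $Q\ge0$, hence $Z_x\ge0$.

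\textbf{Step 3: the upper bound $(fZ)_x\le0$.} Set $R:=-(fZ)_x=f(uZ-Z_x)$. Differentiating the $Z$-equation once more gives a second-order equation for $R$, whose coefficients involve $u_{xx}\le0$, $(u/\sin x)_x\ge0$ and the chord bound $|u_x|\le u/\sin x$. I would then prove $R\ge0$ by a minimum argument, using the boundary values $R(0)=-f(0)Z_x(0)=0$ and $R(\pi)=f(\pi)Z_x(\pi)$, which is $0$ because $Z$ is even about $\pi$, together with Step~2.

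\textbf{Expected obstacle.} I expect Step 3 to be the hardest. The zero-order terms in the $R$-equation are again of indefinite sign, so I would first try to close it as a coupled system in $(Q,R)$, analogous to the $(\mathcal W,\mathcal K)$ system in Lemma~\ref{lem:u-shape}, with one crossing direction ruled out at each endpoint.
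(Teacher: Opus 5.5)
Your Step~1 does not prove $z>0$, and Steps~2 and~3 both depend on it. The minimum argument for $Z$ only rules out a nonpositive minimum where $\mathcal B\ge0$. The complementary interval $\{\mathcal B<0\}=(x_1,\pi)$ is not empty in general: $\mathcal B(\pi)=1+\lambda-2a$ with $a=-u_x(\pi)$, and $a$ becomes large for large $\zeta$ because of the boundary layer of $u$ at $\pi$. Your boundary formula $Z(\pi)=1/\mathcal B(\pi)$ is also wrong. Since $Z$ is even about $\pi$ and $\cot x\sim-1/(\pi-x)$, we get $-2\cot x\,Z_x\to-2Z_{xx}(\pi)$, so the equation at $\pi$ reads $\mathcal B(\pi)Z(\pi)-3Z_{xx}(\pi)=1$. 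Your version would force $Z(\pi)<0$ whenever $\mathcal B(\pi)<0$, which contradicts the lemma. No local argument excludes a negative minimum of $Z$ in $(x_1,\pi]$.

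The missing idea is to conjugate the sign-indefinite coefficient away rather than divide by $\sin x$. The $z$-equation is in divergence form, $-z_{xx}+(uz)_x+\lambda z=\sin x$. Writing $z=e^{v}y$ (so $y$ is a constant multiple of $fz$) gives $-y_{xx}-uy_x+\lambda y=e^{-v}\sin x>0$ with $y(0)=y(\pi)=0$. The zero-order coefficient is now exactly $\lambda>0$, so the maximum principle gives $y>0$, hence $z>0$ and $Z>0$.

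With this in hand, your Step~2 is the paper's argument. However, ``$Q$ first increases and then decreases'' is not justified as stated. Apply your crossing mechanism directly at a negative interior minimum $x_0$ of $Q$: there $\mathcal BZ=1$, $\mathcal B>0$ and $Z_x<0$, so $Q_{xx}(x_0)=f\sin^2x_0\,(\mathcal B_xZ+\mathcal BZ_x)<0$, a contradiction.

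Step~3 is not carried out. You propose a second-order equation for $R$, note that its zero-order terms have no sign, and give no way around this. The missing observation is that no further differentiation is needed. For $\Sigma:=uZ-Z_x=R/f$, eliminating $Z_{xx}$ with the $Z$-equation gives the first-order identity $\Sigma_x=1-A_0Z-2\cot x\,\Sigma$, where $A_0:=1+\lambda-u\cot x$. The chord bound $|u_x|\sin x\le u$ from Lemma~\ref{lem:u-shape} gives $(A_0)_x=(u-u_x\sin x\cos x)/\sin^2x\ge0$.

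Since $\Sigma(0)=\Sigma(\pi)=0$, any negative value forces a negative interior minimum $x_0$. At $x_0$ we have $\Sigma_x=0$ and $Z_x=uZ-\Sigma>0$. Also $A_0(x_0)>0$: this is automatic if $x_0>\pi/2$, since $u\cot x_0<0$; if $x_0\le\pi/2$, then $A_0Z=1-2\cot x_0\,\Sigma\ge1$. Therefore $(A_0Z)_x>0$, and $\Sigma_{xx}(x_0)=-(A_0Z)_x+2\csc^2x_0\,\Sigma(x_0)<0$, a contradiction.

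So the system that closes is the first-order pair $Z_x=uZ-\Sigma$, $\Sigma_x=1-A_0Z-2\cot x\,\Sigma$. The partner of $\Sigma$ is $Z$, not $Q$, and the sign information comes from the monotonicity of $A_0$, not of $\mathcal B$.
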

\begin{proof}
Differentiating~\eqref{eq:HJB} in $\zeta$ and then in $x$ produces the
equation for $z$:
\begin{equation}\label{eq:z-eq}
-z_{xx}+\lambda z+(uz)_x=\sin x,
\end{equation}
with $z(0)=z(\pi)=0$, since $z=v_{x\zeta}$ is odd at both endpoints
by Proposition~\ref{prop:smooth-zeta}.

We first show $z>0$ on $(0,\pi)$. The substitution $z=e^v y$ converts
\eqref{eq:z-eq} into
\[
-y_{xx}-uy_x+\lambda y=e^{-v}\sin x>0 ,
\]
with $y(0)=y(\pi)=0$. At an interior minimum $x_0$ of $y$,
$y_x(x_0)=0$ and $y_{xx}(x_0)\geq 0$, so the equation gives
$\lambda y(x_0)=y_{xx}(x_0)+e^{-v(x_0)}\sin x_0>0$. Hence $y>0$ on
$(0,\pi)$ and $z=e^v y>0$.

We turn to $Z_x\geq 0$. Substituting $z=Z\sin x$
into~\eqref{eq:z-eq} gives
\begin{equation}\label{eq:Z-eq}
-Z_{xx}+(u-2\cot x)Z_x+\mathcal B Z=1,
\end{equation}
which after multiplication by $f\sin^2x$ (using $f_x=-uf$) takes the flux
form
\begin{equation}\label{eq:Z-flux}
\bigl(f\sin^2x\,Z_x\bigr)_x = f\sin^2x\,(\mathcal B Z-1).
\end{equation}
Since $f\sin^2x>0$ on $(0,\pi)$, proving $Z_x\geq0$ is equivalent to
proving non-negativity of the flux $\Psi:=f\sin^2x\,Z_x$. This flux
vanishes at both endpoints. If $\Psi$ had a negative interior minimum at
$x_0$, then $\Psi_x(x_0)=0$ and
\eqref{eq:Z-flux} would give $\mathcal B(x_0)Z(x_0)=1$, hence
$\mathcal B(x_0)>0$ (since $Z>0$). Differentiating \eqref{eq:Z-flux} at
$x_0$,
\[
\Psi_{xx}(x_0)=f\sin^2x_0\bigl(\mathcal B_xZ+\mathcal B Z_x\bigr)(x_0)<0,
\]
because $\mathcal B_x\leq 0$ (Lemma~\ref{lem:u-shape}) and
$Z_x(x_0)<0$ (from $\Psi(x_0)<0$). This contradicts $x_0$ being a
minimum, so $\Psi\geq 0$, i.e.\
\begin{equation}\label{eq:Z-mono}
Z_x\geq 0.
\end{equation}

It remains to prove $Z_x\leq uZ$, which is equivalent to $\Sigma\geq 0$ for
\[
\Sigma:=uZ-Z_x=-\frac{(fZ)_x}{f}.
\]
Computing $\Sigma_x$ from~\eqref{eq:Z-eq},
\begin{equation}\label{eq:Sigma}
\Sigma_x=1-A_0Z-2\cot x\,\Sigma,\qquad A_0:=1+\lambda-u\cot x .
\end{equation}
The chord bound \eqref{eq:chord} from Lemma~\ref{lem:u-shape} gives
\[
(A_0)_x=\frac{u-u_x\sin x\cos x}{\sin^2x}\geq 0.
\]
Since $\Sigma$ vanishes at both endpoints, suppose for contradiction
that $\Sigma$ has a negative interior minimum at $x_0$. We claim
$A_0(x_0)>0$. If $x_0\in(\pi/2,\pi)$, this is immediate from
$u\cot x_0\leq 0$. If $x_0\in(0,\pi/2]$, it follows from
\eqref{eq:Sigma} at $x_0$ (where $\Sigma_x=0$, $\Sigma<0$, and
$\cot x_0\geq 0$), giving
$A_0(x_0)Z(x_0)=1-2\cot x_0\,\Sigma(x_0)\geq 1>0$. Also
$Z_x(x_0)=uZ(x_0)-\Sigma(x_0)>0$. So
$(A_0Z)_x(x_0)=(A_0)_xZ+A_0Z_x>0$. Differentiating \eqref{eq:Sigma},
\[
\Sigma_{xx}(x_0)=-(A_0Z)_x(x_0)+2\csc^2x_0\,\Sigma(x_0)<0 ,
\]
contradicting $x_0$ being a minimum. So $\Sigma\geq 0$, i.e.\
\begin{equation}\label{eq:Z-upper}
Z_x\leq uZ.
\end{equation}

Together, \eqref{eq:Z-mono} and~\eqref{eq:Z-upper} give the claimed
two-sided estimate $0\leq Z_x\leq uZ$. In addition, since $f_x=-uf$,
the upper bound gives
\[
(fZ)_x=f(Z_x-uZ)\leq0.
\]
\end{proof}

\section{Proof of Theorem~\ref{thm:main}}\label{sec:proof}

The proof of part~(i) (in its dimensionless form $A''(\zeta)<0$)
has three steps. We first establish an exact identity decomposing
$A''(\zeta)$ as the sum of a cubic moment and a gradient moment
(Lemma~\ref{lem:second-deriv}). We then show that the cubic moment is
nonnegative (Proposition~\ref{prop:cubic}) and the gradient moment is
strictly positive (Proposition~\ref{prop:gradient}). The two parts of the
theorem are then assembled at the end of the section.

\begin{lemma}[Second-derivative identity]\label{lem:second-deriv}
Let
\[
w:=\frac{A}{\lambda}+v_\zeta,\qquad z:=v_{x\zeta}.
\]
Then
\[
\int_\T wf=0,\qquad
A''(\zeta)=-\lambda\int_\T w^3f-3\int_\T wz^2f .
\]
Equivalently, with $X\sim\mu^\zeta$, $W:=w(X)$, and $Y:=z(X)$,
\[
\mathbb E[W]=0,\qquad
A''(\zeta)=-\lambda\,\mathbb E[W^3]-3\,\mathbb E[WY^2].
\]
\end{lemma}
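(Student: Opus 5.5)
The plan is to differentiate $A(\zeta)=\int_\T \cos x\,f^\zeta\,dx$ twice in $\zeta$. At each stage I will replace $\cos x$, and then $v_{\zeta\zeta}$, using the linearized HJB equations, and integrate by parts against the weight $f$ using $f_x=-uf$. Smoothness in $\zeta$ of all quantities involved, with the stated parities, is supplied by Proposition~\ref{prop:smooth-zeta}, so differentiation under the integral is legitimate. On the torus there are no boundary terms. Write $a:=v_\zeta$ and $b:=v_{\zeta\zeta}$.

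Differentiating~\eqref{eq:HJB} once and twice in $\zeta$ gives
\[
-a_{xx}+\lambda a+u a_x=-\cos x,\qquad -b_{xx}+\lambda b+u b_x+a_x^2=0 .
\]
The basic integration-by-parts fact is $\int_\T(-h_{xx}+uh_x)g f=\int_\T h_x g_x f$, since $(fh_x)_x=f(h_{xx}-uh_x)$. Taking $g\equiv1$ in the equation for $a$ gives $\lambda\int_\T af=-A$, which is exactly $\int_\T wf=0$. Next, $\partial_\zeta\log f=-a+\int_\T af=-w$, so $\partial_\zeta f=-wf$. Hence
\[
A'=-\int_\T \cos x\, wf .
\]
Since $w_{xx}=a_{xx}$ and $w_x=a_x=z$, the first equation gives $\cos x=w_{xx}-uw_x-\lambda w+A$. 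Substituting this, using the integration-by-parts fact with $g=w$, and using centering to kill the term $A\int_\T wf$, I expect
\[
A'(\zeta)=\lambda\int_\T w^2f+\int_\T z^2f .
\]
This already gives the variance-type formula for $A'$.

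Now I differentiate this expression again, with $\partial_\zeta f=-wf$, $\partial_\zeta w=A'/\lambda+b$ and $\partial_\zeta z=b_x$:
\[
A''=2\lambda\int_\T w\Bigl(\tfrac{A'}{\lambda}+b\Bigr)f+2\int_\T z\,b_xf-\lambda\int_\T w^3f-\int_\T wz^2f .
\]
The constant $A'/\lambda$ drops out by centering. For the term $\int_\T z\,b_x f=\int_\T w_xb_xf$, I apply the integration-by-parts fact in the other direction, with $h=b$ and $g=w$. Using the equation for $b$ with $a_x=z$, this gives $-\int_\T w(\lambda b+z^2)f$. So $2\lambda\int_\T wbf+2\int_\T zb_xf=-2\int_\T wz^2f$, and collecting terms yields $A''=-\lambda\int_\T w^3f-3\int_\T wz^2f$. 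The probabilistic restatement with $X\sim\mu^\zeta$ is immediate.

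There is no serious obstacle here; this is a bookkeeping computation. The step requiring care is the second differentiation. One must keep track of the $\zeta$-derivative of the normalization, which is precisely what makes $-w$ the score and produces the cubic term. One must also choose the right pairing so that the $\lambda\int_\T wbf$ terms cancel exactly against the weighted Dirichlet term $\int_\T zb_xf$, leaving only the $a_x^2$ forcing from the $b$-equation.
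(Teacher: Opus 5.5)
Your proposal is correct and follows the paper's proof essentially step for step. The shared steps are: the centering $\int_\T wf=0$ from integrating the once-differentiated HJB against $f$; the score identity $f_\zeta=-wf$; the energy identity $A'=\lambda\int_\T w^2f+\int_\T z^2f$; and testing the twice-differentiated equation against $w$ so that the $\int_\T w\,w_\zeta f$ and $\int_\T z\,z_\zeta f$ terms collapse to $-\int_\T wz^2f$. The only cosmetic difference is your choice $b=v_{\zeta\zeta}$, with the constant $A'/\lambda$ dropped by centering, whereas the paper takes $b=w_\zeta$ and carries $A'f$ on the right-hand side of its divergence-form equation.
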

\begin{proof}
Differentiating~\eqref{eq:HJB} in $\zeta$, multiplying by $f$, and
using $f_x=-uf$ gives the divergence form
\begin{equation}\label{eq:v-zeta-eq}
-(fv_{x\zeta})_x+\lambda fv_\zeta=-f\cos x.
\end{equation}
Integrating over $\T$ yields $\lambda\int_\T v_\zeta f=-A$, hence
$\int_\T wf=0$. Combined with $f=e^{-v^\zeta}/\mathcal Z^\zeta$, this gives
\begin{equation}\label{eq:f-zeta}
f_\zeta=-wf,
\end{equation}
since $f_\zeta=-v_\zeta f-(\mathcal Z^\zeta_\zeta/\mathcal Z^\zeta)f$ and
$\mathcal Z^\zeta_\zeta/\mathcal Z^\zeta=-\int_\T v_\zeta f=A/\lambda$.

Rewriting~\eqref{eq:v-zeta-eq} in terms of $w=A/\lambda+v_\zeta$,
\begin{equation}\label{eq:w-energy-eq}
-(fw_x)_x+\lambda fw=(A-\cos x)f .
\end{equation}
Differentiating $A=\int_\T\cos x\,f$ and using~\eqref{eq:f-zeta}
together with $\int_\T wf=0$,
\[
A'=-\int_\T \cos x\,wf=\int_\T (A-\cos x)wf .
\]
Testing~\eqref{eq:w-energy-eq} against $w$ then yields the energy
identity
\begin{equation}\label{eq:A-prime-energy}
A'=\int_\T z^2f+\lambda\int_\T w^2f .
\end{equation}

Differentiating~\eqref{eq:HJB} twice in $\zeta$ and multiplying by
$f$, the same divergence-form computation that produced
\eqref{eq:v-zeta-eq} gives, with $b:=w_\zeta$ (so that $b_x=z_\zeta$),
\begin{equation}\label{eq:b-eq}
-(fb_x)_x+\lambda fb=A'f-z^2f .
\end{equation}
Testing~\eqref{eq:b-eq} against $w$ and using $\int_\T wf=0$,
\begin{equation}\label{eq:b-tested}
\int_\T zz_\zeta f+\lambda\int_\T ww_\zeta f
=-\int_\T wz^2f .
\end{equation}
Finally, differentiating~\eqref{eq:A-prime-energy} in $\zeta$ and
applying~\eqref{eq:f-zeta},
\[
A''=2\int_\T zz_\zeta f+2\lambda\int_\T ww_\zeta f
-\int_\T wz^2f-\lambda\int_\T w^3f,
\]
and substituting~\eqref{eq:b-tested} gives the claimed formula.
\end{proof}

The cubic moment $\int_\T w^3 f$ in the identity above is handled in
Proposition~\ref{prop:cubic} via a change of variables
$C(x)=\cos\theta(x)$ under which $w$ becomes affine in $\cos\theta$
and $f$ pushes forward to an explicit density $\eta$ on $[0,\pi]$. The
two ingredients are a shape statement on $v_\zeta$, which will force
$\eta$ to be nonincreasing (Lemma~\ref{lem:geometric-mean} below), and
an abstract inequality for centered cubic moments of $\cos\theta$
against nonincreasing densities (Lemma~\ref{lem:monotone-cosine-skewness}).

\begin{lemma}[Geometric-mean monotonicity]\label{lem:geometric-mean}
Let
\[
a:=v_\zeta,\qquad
\Delta_0(x):=a(x)-a(0),\qquad \Delta_\pi(x):=a(\pi)-a(x).
\]
Then
\[
H(x):=e^{-v(x)}\frac{\sqrt{\Delta_0(x)\Delta_\pi(x)}}{\sin x}
\]
is nonincreasing on $(0,\pi)$.
\end{lemma}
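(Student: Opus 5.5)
The plan is to work with the logarithmic derivative of $H$ and to reduce the monotonicity to a one-variable elementary inequality, using only the bounds of Lemma~\ref{lem:Z-bounds} and the lean of $u$ from Lemma~\ref{lem:u-shape}.

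\textbf{Reduction to a pointwise inequality.} Since $a_x=z=Z\sin x>0$ on $(0,\pi)$ by Lemma~\ref{lem:Z-bounds}, both $\Delta_0$ and $\Delta_\pi$ are positive there. Hence $H$ is smooth and positive on $(0,\pi)$, and
\[
(\log H)_x=-u+\frac{z}{2\Delta_0}-\frac{z}{2\Delta_\pi}-\cot x .
\]
So it suffices to prove
\begin{equation*}
\frac{z}{2}\Bigl(\frac1{\Delta_0}-\frac1{\Delta_\pi}\Bigr)\leq u+\cot x
\qquad\text{on }(0,\pi).
\end{equation*}
Fix $x$ and write $\Delta_0=\int_0^x Z(s)\sin s\,ds$ and $\Delta_\pi=\int_x^\pi Z(s)\sin s\,ds$. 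The inequality needs a lower bound on $\Delta_0$ and an upper bound on $\Delta_\pi$, each relative to $Z(x)$.

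\textbf{Comparing $Z(s)$ with $Z(x)$.} Both bounds come from $(fZ)_x\leq0$ in Lemma~\ref{lem:Z-bounds}.
\begin{itemize}
\item For $s<x$ we have $f(s)Z(s)\geq f(x)Z(x)$, so $Z(s)\geq Z(x)e^{v(s)-v(x)}$.
\item For $s>x$ we have $Z(s)\leq Z(x)e^{v(s)-v(x)}$.
\end{itemize}
Setting $E_0:=\int_0^x e^{v(s)-v(x)}\sin s\,ds$ and $E_\pi:=\int_x^\pi e^{v(s)-v(x)}\sin s\,ds$, this gives $\Delta_0\geq Z(x)E_0$ and $\Delta_\pi\leq Z(x)E_\pi$. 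It then suffices to show
\[
\frac{\sin x}{2}\Bigl(\frac1{E_0}-\frac1{E_\pi}\Bigr)\leq u(x)+\cot x .
\]
For $u\equiv0$ one has $E_0=1-\cos x$ and $E_\pi=1+\cos x$, and the left side equals $\tfrac12(\cot\tfrac x2-\tan\tfrac x2)=\cot x$. So the reduced inequality is sharp at the uniform state, which suggests the reduction is the right one.

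\textbf{Using the lean of $u$.} Set $k:=u(x)/\sin x$. By Lemma~\ref{lem:u-shape}, $u(s)\leq k\sin s$ for $s<x$ and $u(s)\geq k\sin s$ for $s>x$. Integrating $v(s)-v(x)=-\int_s^x u$ gives $v(s)-v(x)\geq k(\cos x-\cos s)$ in both ranges. Hence
\[
E_0\geq \frac{1-e^{-kp}}{k},\qquad E_\pi\geq\frac{e^{kq}-1}{k},\qquad p:=1-\cos x,\ q:=1+\cos x .
\]
Both bounds have the correct direction, since $E_0$ and $E_\pi$ enter through $1/E_0$ and $-1/E_\pi$ respectively. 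Dividing by $k\sin x$, the target becomes the elementary inequality
\[
\frac12\Bigl(\frac{1}{1-e^{-kp}}-\frac{1}{e^{kq}-1}\Bigr)\leq 1+\frac{\cos x}{k\sin^2x},\qquad k>0 .
\]

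\textbf{Main obstacle.} The remaining difficulty is verifying this scalar inequality for all $k>0$ and $x\in(0,\pi)$. The endpoint checks are encouraging:
\begin{itemize}
\item As $k\to0$, the left side is $\tfrac12+\cos x/(k\sin^2x)+O(k)$, which is below the right side by a margin of $\tfrac12$.
\item As $k\to\infty$, the left side tends to $\tfrac12$.
\end{itemize}
To handle intermediate $k$, I would first write the left side via $\tfrac1{1-e^{-y}}=1+\tfrac1{e^y-1}$, which turns the claim into
\[
\tfrac12\bigl(g(kp)-g(kq)\bigr)\leq\tfrac12+\frac{\cos x}{k\sin^2x},\qquad g(y):=\frac1{e^y-1}.
\]
I would then use the Bernoulli-type expansion $g(y)=\tfrac1y-\tfrac12+\tfrac{y}{12}-\cdots$. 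Alternatively, I would use convexity of $g(y)-1/y$ to bound $g(kp)-g(kq)$ by $\tfrac1{kp}-\tfrac1{kq}+C$ with an explicit constant $C\leq1$, noting that $\tfrac1p-\tfrac1q=2\cos x/\sin^2x$. If this crude route loses too much near $x\to0$ or $x\to\pi$, I would fall back on keeping the exact integrals $E_0,E_\pi$ and additionally invoking the concavity of $u$ from Lemma~\ref{lem:u-shape} to sharpen the comparison near the endpoints.
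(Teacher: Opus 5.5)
Your reduction is sound through $\Delta_0\ge Z(x)E_0$ and $\Delta_\pi\le Z(x)E_\pi$; this is exactly the paper's step, with $I_0=E_0$ and $I_\pi=E_\pi$. The bound on $E_\pi$, however, goes the wrong way. After $\Delta_\pi\le Z(x)E_\pi$, the term still to be bounded above is $-\sin x/(2E_\pi)$, which is \emph{increasing} in $E_\pi$, so you need an \emph{upper} bound on $E_\pi$. Your assertion that both bounds ``have the correct direction'' fails for this one.

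The lean of $u$ gives $u(s)\ge k\sin s$ on $(x,\pi)$. That yields only a lower bound on $v(s)-v(x)$ there, hence only the lower bound $E_\pi\ge(e^{kq}-1)/k$, which is useless here. Consequently your scalar inequality does not imply $(\log H)_x\le 0$. It is also not where any difficulty lies: using $\frac{\cos x}{\sin^2x}=\frac12(\frac1p-\frac1q)$, it is equivalent to $h(kp)-h(kq)\le 1$ with $h(y):=\frac1{e^y-1}-\frac1y\in(-\frac12,0)$, which is immediate.

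The missing input is an upper bound on $u$ to the right of $x$. The other half of the chord bound in Lemma~\ref{lem:u-shape} supplies it: $\sin s\,u_x(s)\le u(s)$ says $u/\tan\frac s2$ is nonincreasing. Since $u(x)/\tan\frac x2=kq$, this gives $u(s)\le kq\tan\frac s2$ for $s>x$. Integrating yields $e^{v(s)-v(x)}\le\bigl(\cos\frac x2/\cos\frac s2\bigr)^{2kq}$, and therefore
\[
E_\pi\le\frac{q}{1-kq}=\frac{\sin x}{\tan\frac x2-u(x)}\qquad\text{if }kq<1 .
\]
When $kq\ge 1$ the integral diverges, so a case split is unavoidable, as in the paper.

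Your lower bound on $E_0$ from the lean is correct. It implies $E_0\ge p/(1+kp)=\sin x/(u+\cot\frac x2)$, which is the paper's bound for $I_0$, because $(1-e^{-y})/y\ge 1/(1+y)$ is just $1+y\le e^y$. The two cases then close as follows, using $\cot\frac x2-\tan\frac x2=2\cot x$:
\begin{itemize}
\item Where $\tan\frac x2\le u$, discard the negative term $-z/(2\Delta_\pi)$. You get $(\log H)_x\le\frac12(\tan\frac x2-u)\le 0$.
\item Where $\tan\frac x2>u$, combine with the corrected bound $\sin x/E_\pi\ge\tan\frac x2-u$. You get $(\log H)_x\le 0$.
\end{itemize}

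Repaired this way, your argument is essentially the paper's proof. The paper obtains the same two barriers for $I_0$ and $I_\pi$ by ODE comparison, using the two halves of $|u_x|\le u/\sin x$ respectively. This is also the only sense in which your concavity fallback could help: it has to produce an upper bound on $u$ on $(x,\pi)$ such as this chord bound.
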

\begin{proof}
With $z, Z$ as in Lemma~\ref{lem:Z-bounds} and $z=a_x$,
$z,Z>0$ on $(0,\pi)$ and $(e^{-v}Z)_x\leq 0$.

To identify pointwise bounds on $\Delta_0,\Delta_\pi$ that force $H_x\leq 0$, use
$\sin^2 x=(1-\cos x)(1+\cos x)$ to factor
\[
H^2=\Bigl(e^{-v}\,\frac{\Delta_0}{1-\cos x}\Bigr)\Bigl(e^{-v}\,\frac{\Delta_\pi}{1+\cos x}\Bigr),
\]
so that $2\log H$ is the sum of the two logarithms. Using $(\Delta_0)_x=z$
and $(\Delta_\pi)_x=-z$,
\[
\partial_x\log\Bigl(e^{-v}\frac{\Delta_0}{1-\cos x}\Bigr)=-u+\frac z{\Delta_0}-\frac{\sin x}{1-\cos x},\qquad
\partial_x\log\Bigl(e^{-v}\frac{\Delta_\pi}{1+\cos x}\Bigr)=-u-\frac z{\Delta_\pi}+\frac{\sin x}{1+\cos x}.
\]
The trigonometric remainders are half-angle quantities. With
\begin{equation}\label{eq:rt-def}
r:=\cot\tfrac x2=\frac{\sin x}{1-\cos x}=\frac{1+\cos x}{\sin x},\qquad
t:=\tan\tfrac x2=\frac{\sin x}{1+\cos x}=\frac{1-\cos x}{\sin x},
\end{equation}
which satisfy $rt=1$, the log-derivatives simplify to
\[
\partial_x\log\Bigl(e^{-v}\frac{\Delta_0}{1-\cos x}\Bigr)=-u+\frac{z}{\Delta_0}-r,\qquad
\partial_x\log\Bigl(e^{-v}\frac{\Delta_\pi}{1+\cos x}\Bigr)=-u-\frac{z}{\Delta_\pi}+t.
\]
The first is $\leq 0$ iff $z/\Delta_0\leq u+r$, i.e.\ $\Delta_0\geq z/(u+r)$.
The second is automatically $\leq 0$ where $t\leq u$ (both $-u+t$ and
$-z/\Delta_\pi$ are nonpositive), and where $t>u$ it reduces to
$z/\Delta_\pi\geq t-u$, i.e.\ $\Delta_\pi\leq z/(t-u)$. So it suffices to establish
\begin{equation}\label{eq:a-pm-bounds}
\Delta_0(x)\geq\frac{z(x)}{u(x)+r(x)},\qquad
\Delta_\pi(x)\leq\frac{z(x)}{t(x)-u(x)}\ \text{on $\{t>u\}$.}
\end{equation}

We translate these into integrals via the monotonicity of $e^{-v}Z$.
Since $(e^{-v}Z)_x\leq 0$,
\[
Z(y)\geq e^{v(y)-v(x)}Z(x)\text{ for }y\leq x,\qquad
Z(y)\leq e^{v(y)-v(x)}Z(x)\text{ for }y\geq x,
\]
and, since $a_x=z=Z\sin x$, multiplying by $\sin y$ and integrating gives
\begin{equation}\label{eq:a-ZI}
\Delta_0(x)\geq Z(x)\,I_0(x),\qquad \Delta_\pi(x)\leq Z(x)\,I_\pi(x),
\end{equation}
where
\[
I_0(x):=e^{-v(x)}\int_0^x e^{v(y)}\sin y\,dy,\qquad
I_\pi(x):=e^{-v(x)}\int_x^\pi e^{v(y)}\sin y\,dy
\]
solve, by differentiating under the integral and using $v_x=u$, the
first-order ODEs
\begin{equation}\label{eq:I-odes}
I_0'+uI_0=\sin x,\quad I_0(0)=0;\qquad
I_\pi'+uI_\pi=-\sin x,\quad I_\pi(\pi)=0.
\end{equation}
By~\eqref{eq:a-ZI} and $Z=z/\sin x$, the bounds~\eqref{eq:a-pm-bounds} will follow from
\begin{equation}\label{eq:I-bounds}
I_0(x)\geq\frac{\sin x}{u(x)+r(x)},\qquad
I_\pi(x)\leq\frac{\sin x}{t(x)-u(x)}\ \text{on $\{t>u\}$,}
\end{equation}
which we now prove. Both proofs use the derivative identities
\begin{equation}\label{eq:half-angle-diff}
\sin x\cdot r'=-r,\qquad \sin x\cdot t'=t,
\end{equation}
obtained by differentiating either form in~\eqref{eq:rt-def}.

For the lower bound on $I_0$, set
\[
D_0:=\frac{\sin x}{u+r}.
\]
Differentiating
$D_0(u+r)=\sin x$ and using $r'=-r/\sin x$
from~\eqref{eq:half-angle-diff} and $r\sin x=1+\cos x$
from~\eqref{eq:rt-def} to simplify the resulting algebra yields
\begin{equation}\label{eq:D0-ode}
D_0'+uD_0=\sin x-\frac{u+\sin x\,u_x}{(u+r)^2}.
\end{equation}
The chord bound $|u_x|\leq u/\sin x$ from Lemma~\ref{lem:u-shape} gives
$\sin x\,u_x\geq -u$, so the subtracted term in~\eqref{eq:D0-ode} is
nonnegative. Subtracting~\eqref{eq:D0-ode} from~\eqref{eq:I-odes} and
multiplying by $e^v$ gives
\[
\bigl(e^v(I_0-D_0)\bigr)_x=e^v\bigl[(I_0-D_0)'+u(I_0-D_0)\bigr]\geq 0.
\]
As $x\to 0^+$, $u\to 0$ and $r\sim 2/x$, so $D_0\sim x^2/2$. Likewise
$I_0\sim\int_0^x y\,dy=x^2/2$. Hence $e^v(I_0-D_0)$ vanishes at $0$ and
is nondecreasing, giving $I_0\geq D_0$.

For the upper bound on $I_\pi$, first observe that $\{t>u\}$ is an
interval $(x_*,\pi)$ for some $x_*\in[0,\pi)$. Indeed, from $t'=t/\sin x$
in~\eqref{eq:half-angle-diff} and the chord bound~\eqref{eq:chord},
\[
\Bigl(\frac u t\Bigr)_x=\frac{u_x-u/\sin x}{t}\leq 0,
\]
so $u/t$ is nonincreasing on $(0,\pi)$. Since $u(\pi)=0$ and
$t(\pi)=+\infty$, $u/t\to 0$ at $\pi$, so $\{u/t<1\}=\{t>u\}$ is a
right-neighborhood of $\pi$, and it has the form
$\{t>u\}=(x_*,\pi)$. On this interval, set
\[
D_\pi:=\frac{\sin x}{t-u}.
\]
Differentiating $D_\pi(t-u)=\sin x$ and using $\sin x\cdot t'=t$
together with $t\sin x=1-\cos x$ to simplify gives
\[
D_\pi'+uD_\pi=-\sin x+\frac{\sin x\,u_x-u}{(t-u)^2}.
\]
The chord bound~\eqref{eq:chord} gives $\sin x\,u_x\leq u$, so the
last term on the right is nonpositive. Subtracting this from~\eqref{eq:I-odes} and multiplying
by $e^v$ gives
\[
\bigl(e^v(D_\pi-I_\pi)\bigr)_x=e^v\bigl[(D_\pi-I_\pi)'+u(D_\pi-I_\pi)\bigr]\leq 0.
\]
At $x=\pi^-$ both $I_\pi$ and $D_\pi$ tend to $0$ ($I_\pi$ from its
definition, $D_\pi$ because $\sin x\to 0$ while $t-u\to\infty$).
Integrating backward from $\pi$ yields $D_\pi\geq I_\pi$.

This establishes~\eqref{eq:I-bounds}, hence~\eqref{eq:a-pm-bounds},
hence $2(\log H)_x\leq 0$, so $H$ is nonincreasing.
\end{proof}

The next lemma is the abstract counterpart to the geometric-mean
monotonicity just established: for any nonincreasing probability density
on $[0,\pi]$, the centered cubic moment of $\cos\theta$ has a definite
sign. It is purely measure-theoretic and makes no reference to the mean
field game.

\begin{lemma}[Monotone cosine skewness]\label{lem:monotone-cosine-skewness}
Let $\eta\geq0$ be a nonincreasing probability density on $[0,\pi]$ and set
\[
\bar c:=\int_0^\pi \cos\theta\,\eta(\theta)\,d\theta .
\]
Then
\[
\int_0^\pi(\bar c-\cos\theta)^3\eta(\theta)\,d\theta\geq 0.
\]
\end{lemma}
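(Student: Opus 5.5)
The plan is to reduce to uniform densities on initial segments $[0,s]$ via a layer-cake decomposition, after first adding a multiple of the centering constraint so that the integrand has a favorable sign pattern. Write $h(\theta):=\bar c-\cos\theta$, which is increasing on $[0,\pi]$ from $h(0)=\bar c-1$ to $h(\pi)=\bar c+1$, and satisfies $\int_0^\pi h\eta\,d\theta=0$.

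\emph{Step 1: $\bar c\geq 0$.} Since $\cos$ is decreasing and $\eta$ is nonincreasing, Chebyshev's correlation inequality gives $\bar c\geq\pi^{-1}\int_0^\pi\cos\theta\,d\theta=0$.

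\emph{Step 2: shift the integrand.} Set $K:=(1-\bar c)^2=h(0)^2$. By the centering,
\[
\int_0^\pi h^3\eta\,d\theta=\int_0^\pi \varphi\,\eta\,d\theta,\qquad \varphi:=h\,(h^2-K).
\]
With this choice, $\varphi\geq0$ where $h\in[\bar c-1,0]$, $\varphi\leq 0$ where $h\in[0,1-\bar c]$, and $\varphi\geq 0$ where $h\geq 1-\bar c$. Since $\bar c\geq 0$, these three sets are consecutive intervals $[0,\theta_0]$, $[\theta_0,\theta_1]$, $[\theta_1,\pi]$, where $\cos\theta_0=\bar c$ and $\cos\theta_1=2\bar c-1$.

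\emph{Step 3: layer cake.} Write $\eta(\theta)=\int 1_{[0,s]}(\theta)\,d\nu(s)$ with $\nu\geq 0$; this holds for any nonincreasing $\eta\geq 0$, including a possible atom at $s=\pi$ for the limiting value $\eta(\pi^-)$. Then
\[
\int_0^\pi\varphi\eta\,d\theta=\int \Phi(s)\,d\nu(s),\qquad \Phi(s):=\int_0^s\varphi\,d\theta ,
\]
so it suffices to show $\Phi\geq 0$ on $[0,\pi]$. Note that $\Phi$ itself is defined with $\bar c$ and $K$ frozen, so the reduction is legitimate even though $\bar c$ depends on $\eta$. By the sign pattern, $\Phi$ increases on $[0,\theta_0]$, decreases on $[\theta_0,\theta_1]$, and increases on $[\theta_1,\pi]$. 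Hence it is enough to check $\Phi(\theta_1)\geq 0$.

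\emph{Step 4: the symmetrization (main obstacle).} On $[0,\theta_1]$ the variable $h$ runs over the symmetric interval $[-(1-\bar c),1-\bar c]$. The function $\psi(h):=h(h^2-K)$ is odd, and is $\geq0$ for $h\leq 0$. Changing variables with $d\theta=dh/\sin\theta$ gives
\[
\Phi(\theta_1)=\int_0^{1-\bar c}\psi(-s)\Bigl[\frac{1}{\sin\theta(-s)}-\frac{1}{\sin\theta(s)}\Bigr]ds ,
\]
where $\cos\theta(\pm s)=\bar c\mp s$. Because $\bar c\geq 0$, we have $|\bar c+s|\geq|\bar c-s|$. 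Hence $\sin\theta(-s)\leq\sin\theta(s)$, so the bracket is $\geq0$, and $\psi(-s)\geq 0$ as well. Thus $\Phi(\theta_1)\geq 0$. The endpoint singularity at $\theta=0$ is integrable, since $\psi$ vanishes at $h=\bar c-1$; I would check this carefully. This reflection across $h=0$, that is, across the level $\cos\theta=\bar c$, is where the real content lies: the flatness of $\cos$ near $0$ gives the negative-$h$ side more $\theta$-length. Combining Steps 2–4 gives $\int_0^\pi(\bar c-\cos\theta)^3\eta\,d\theta=\int\Phi\,d\nu\geq 0$.

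As a sanity check, $\Phi(\pi)=\pi\bar c\,(2\bar c+\tfrac12)\geq 0$, which is consistent with the argument.
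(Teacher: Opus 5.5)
Your proof is correct, and its core matches the paper's. Both arguments:
\begin{itemize}
\item first establish $\bar c\geq 0$ (you by Chebyshev's inequality, the paper by pairing $\theta$ with $\pi-\theta$);
\item in effect subtract $(1-\bar c)^2$ times the centering identity (this is the paper's chain through $(1-\bar c)^2\int\tau(\alpha_+-\alpha_-)$);
\item rest on the reflection across the level $\cos\theta=\bar c$, where $\bar c\geq 0$ gives $\sqrt{1-(\bar c+s)^2}\leq\sqrt{1-(\bar c-s)^2}$.
\end{itemize}

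The real difference is in how monotonicity of $\eta$ is used. The paper pushes $\eta$ forward to densities $\alpha_\pm$ on the two sides of $\theta_0=\arccos\bar c$. It uses monotonicity pointwise, in the numerator comparison $\eta(\arccos(\bar c+\tau))\geq\eta(\arccos(\bar c-\tau))$, so that $\alpha_+\geq\alpha_-$ on $[0,1-\bar c]$, and a single inequality chain finishes. You instead use monotonicity only through positivity of $\nu$ in the layer-cake representation, with $\bar c$ frozen. This reduces the claim to the extreme rays $\mathbf 1_{[0,s]}$. The three-interval sign pattern of $\varphi$ then collapses this one-parameter family to the single check $\Phi(\theta_1)\geq 0$, where only the Jacobian comparison is needed.

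Your route isolates exactly where the geometry enters: the flatness of $\cos$ near $0$, tested against the single truncation $\mathbf 1_{[0,\theta_1]}$. The paper's route is shorter and needs no decomposition of $\eta$.

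The Fubini exchange you rely on is fine even if $\eta$ is unbounded at $0$, so that $\nu$ has infinite mass near $s=0$. This is because $\varphi$ is bounded and $\eta\in L^1$, so Tonelli applies to $|\varphi|\eta$.
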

\begin{proof}
On $[0,\pi/2]$ monotonicity gives $\eta(\theta)\geq\eta(\pi-\theta)$. Combined with $\cos(\pi-\theta)=-\cos\theta$ this yields
\[
\bar c=\int_0^{\pi/2}\cos\theta\,\bigl(\eta(\theta)-\eta(\pi-\theta)\bigr)\,d\theta\geq 0.
\]
By definition of $\bar c$,
\begin{equation}\label{eq:zero-mean-cos}
\int_0^\pi(\cos\theta-\bar c)\eta(\theta)\,d\theta=0.
\end{equation}
We prove the equivalent inequality
\[
\int_0^\pi(\cos\theta-\bar c)^3\eta(\theta)\,d\theta\leq 0.
\]

Let $\theta_0:=\arccos\bar c$. On $[0,\theta_0]$, $\cos\theta$ decreases
from $1$ to $\bar c$, so setting $\tau:=\cos\theta-\bar c$ gives
$\tau\in[0,1-\bar c]$ with
$d\tau=-\sin\theta\,d\theta=-\sqrt{1-(\bar c+\tau)^2}\,d\theta$, hence
pushforward density
\[
\alpha_+(\tau):=\frac{\eta(\arccos(\bar c+\tau))}{\sqrt{1-(\bar c+\tau)^2}},\qquad\tau\in[0,1-\bar c].
\]
On $[\theta_0,\pi]$, $\cos\theta$ decreases from $\bar c$ to $-1$, so
setting $\tau:=\bar c-\cos\theta$ gives $\tau\in[0,1+\bar c]$ and the
analogous computation yields
\[
\alpha_-(\tau):=\frac{\eta(\arccos(\bar c-\tau))}{\sqrt{1-(\bar c-\tau)^2}},\qquad\tau\in[0,1+\bar c].
\]
The supports differ ($1+\bar c\geq 1-\bar c$ since $\bar c\geq 0$), but
on the common subinterval $[0,1-\bar c]$ we have $\alpha_+\geq\alpha_-$:
since $\bar c+\tau\geq\bar c-\tau$ and $\arccos$ is decreasing,
monotonicity of $\eta$ gives the numerator inequality
$\eta(\arccos(\bar c+\tau))\geq\eta(\arccos(\bar c-\tau))$, and
$(\bar c+\tau)^2\geq(\bar c-\tau)^2$ (using $\bar c\geq 0$) gives the
denominator one.

In these variables, \eqref{eq:zero-mean-cos} reads
\begin{equation}\label{eq:zm-tau}
\int_0^{1-\bar c}\tau\,(\alpha_+-\alpha_-)\,d\tau
=\int_{1-\bar c}^{1+\bar c}\tau\,\alpha_-(\tau)\,d\tau ,
\end{equation}
and the centered cubic moment becomes
\[
\int_0^\pi(\cos\theta-\bar c)^3\eta\,d\theta
=\int_0^{1-\bar c}\tau^3(\alpha_+-\alpha_-)\,d\tau
-\int_{1-\bar c}^{1+\bar c}\tau^3\,\alpha_-(\tau)\,d\tau .
\]
Since $\tau^2\leq(1-\bar c)^2$ on $[0,1-\bar c]$ and
$\tau^2\geq(1-\bar c)^2$ on $[1-\bar c,1+\bar c]$,
\[
\int_0^{1-\bar c}\tau^3(\alpha_+-\alpha_-)\,d\tau
\leq (1-\bar c)^2\int_0^{1-\bar c}\tau(\alpha_+-\alpha_-)\,d\tau
\stackrel{\eqref{eq:zm-tau}}{=}(1-\bar c)^2\int_{1-\bar c}^{1+\bar c}\tau\,\alpha_-\,d\tau
\leq \int_{1-\bar c}^{1+\bar c}\tau^3\,\alpha_-\,d\tau .
\]
Hence $\int_0^\pi(\cos\theta-\bar c)^3\eta\,d\theta\leq 0$, which is
the stated inequality.
\end{proof}

With both ingredients in hand we prove that the cubic moment is nonnegative.
The geometric-mean monotonicity (Lemma~\ref{lem:geometric-mean}) is
used to show that the pushforward density $\eta$ is nonincreasing, and
the cosine-skewness inequality (Lemma~\ref{lem:monotone-cosine-skewness})
then yields the sign.

\begin{proposition}[Sign of the cubic moment]\label{prop:cubic}
For every $\zeta>0$,
\[
\int_\T w^3 f\geq 0.
\]
\end{proposition}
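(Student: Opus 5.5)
We will reduce the cubic moment to the centered cubic $\cos\theta$-moment of a pushforward density, and then combine Lemma~\ref{lem:geometric-mean} with Lemma~\ref{lem:monotone-cosine-skewness}. Throughout, $a=v_\zeta$, $z=a_x$, and $\Delta_0,\Delta_\pi$ are as in Lemma~\ref{lem:geometric-mean}, with $\Delta:=a(\pi)-a(0)=\Delta_0+\Delta_\pi$.

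\textbf{Step 1: $w$ is affine in a cosine variable.} By Lemma~\ref{lem:Z-bounds}, $z>0$ on $(0,\pi)$. Hence $a$ is strictly increasing on $[0,\pi]$, $\Delta>0$, and $\Delta_0,\Delta_\pi>0$ in the interior. Set
\[
C:=\frac{\Delta_\pi-\Delta_0}{\Delta}=1-\frac{2\Delta_0}{\Delta}.
\]
Then $C$ decreases strictly from $1$ to $-1$, and $a=a(0)+\tfrac{\Delta}{2}(1-C)$. Therefore
\[
w=\tfrac{\Delta}{2}(\bar c-C),\qquad \bar c:=\tfrac{2}{\Delta}\bigl(A/\lambda+a(0)\bigr)+1 .
\]
The centering $\int_\T wf=0$ from Lemma~\ref{lem:second-deriv} forces $\bar c=\int_\T Cf$. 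By evenness about $0$,
\[
\int_\T w^3f=2\bigl(\tfrac{\Delta}{2}\bigr)^3\int_0^\pi(\bar c-C)^3f\,dx ,
\]
where $2f\,dx$ is a probability measure on $[0,\pi]$ and $\bar c=\int_0^\pi C\,2f\,dx$.

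\textbf{Step 2: pushforward.} Define $\theta(x):=\arccos C(x)$. This is a homeomorphism of $[0,\pi]$ and a $C^1$ diffeomorphism of the interior. We have $C_x=-2z/\Delta$ and
\[
\sin\theta=\sqrt{1-C^2}=\frac{2\sqrt{\Delta_0\Delta_\pi}}{\Delta},
\]
using $1-C^2=(1-C)(1+C)=4\Delta_0\Delta_\pi/\Delta^2$. Hence $\theta_x=-C_x/\sin\theta=z/\sqrt{\Delta_0\Delta_\pi}>0$. The pushforward of $2f\,dx$ under $\theta$ has density
\[
\eta(\theta(x))=\frac{2f(x)}{\theta_x(x)}=\frac{2f\sqrt{\Delta_0\Delta_\pi}}{z}
=\frac{2}{\mathcal Z}\cdot\frac{e^{-v}\sqrt{\Delta_0\Delta_\pi}}{\sin x}\cdot\frac1Z
=\frac{2}{\mathcal Z}\,\frac{H(x)}{Z(x)} .
\]
The change of variables turns the integral into $\int_0^\pi(\bar c-\cos\theta)^3\eta(\theta)\,d\theta$, with $\bar c=\int_0^\pi\cos\theta\,\eta$.

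\textbf{Step 3: monotonicity and conclusion.} By Lemma~\ref{lem:geometric-mean}, $H$ is nonincreasing, and $H>0$. By Lemma~\ref{lem:Z-bounds}, $Z>0$ and $Z_x\ge0$. Hence $H/Z$ is nonincreasing in $x$. Since $\theta$ is increasing in $x$, $\eta$ is a nonincreasing probability density on $[0,\pi]$; its endpoint values are irrelevant for integrals. Lemma~\ref{lem:monotone-cosine-skewness} then gives $\int_0^\pi(\bar c-\cos\theta)^3\eta\ge0$, and multiplying by $2(\Delta/2)^3>0$ yields $\int_\T w^3f\ge0$.

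The substantive input is entirely in Lemma~\ref{lem:geometric-mean}. The only point needing care here is the algebraic identification of the Jacobian in Step 2. The factorization $1-C^2=4\Delta_0\Delta_\pi/\Delta^2$ is exactly what makes $\eta$ proportional to $H/Z$. Integrability near the endpoints is not an issue, since $\eta$ is the pushforward of a bounded density under a homeomorphism.
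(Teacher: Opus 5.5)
Your proof is correct and takes the paper's route: you write $w$ as an affine function of $C=\cos\theta$, push $2f\,dx$ forward to $\eta$, get $\eta$ nonincreasing from Lemma~\ref{lem:geometric-mean} together with $Z_x\geq 0$, and conclude with Lemma~\ref{lem:monotone-cosine-skewness}. Your closed form $\eta(\theta(x))=\frac{2}{\mathcal Z}\,H(x)/Z(x)$ repackages the paper's log-derivative split \eqref{eq:eta-split} more transparently, since that split is exactly $\frac{d}{dx}\log\eta(\theta(x))=(\log H)_x-(\log Z)_x$; the stray factor $2$ in your final constant is harmless.
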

\begin{proof}
With $a:=v_\zeta$ and $z, Z$ as in Lemma~\ref{lem:Z-bounds} (so $z=a_x$), set
\[
\Delta:=a(\pi)-a(0),\qquad
\Delta_0(x):=a(x)-a(0),\qquad \Delta_\pi(x):=a(\pi)-a(x),
\]
\[
C:=\frac{\Delta_\pi-\Delta_0}{\Delta},\qquad S:=1-C^2,
\]
so that $w=A/\lambda+a$ in the notation of Lemma~\ref{lem:second-deriv}.
By Lemma~\ref{lem:Z-bounds}, $z>0$ on $(0,\pi)$, so $\Delta>0$ and
\begin{equation}\label{eq:Cx-formula}
C_x=-\frac{2z}{\Delta}=-\frac{2}{\Delta}\sin x\,Z<0.
\end{equation}
Hence $C\colon[0,\pi]\to[-1,1]$ is a strictly
decreasing bijection from $C(0)=1$ to $C(\pi)=-1$, and we define
$\theta\colon[0,\pi]\to[0,\pi]$ by
\[
C(x)=\cos\theta(x).
\]
Let
\[
\eta(\theta):=\frac{2f(x(\theta))}{\theta'(x(\theta))}
\]
be the pushforward density of $2f(x)\,dx$ under this map. Its total mass is
\[
\int_0^\pi \eta(\theta)\,d\theta
=\int_0^\pi \frac{2f(x(\theta))}{\theta'(x(\theta))}\,d\theta
=2\int_0^\pi f(x)\,dx
=\int_\T f(x)\,dx=1,
\]
where the penultimate equality uses the evenness of $f$. Thus $\eta$ is
a probability density on $[0,\pi]$.

We first express the cubic moment as a centered cubic moment of
$\cos\theta$ under $\eta$. Since
$\Delta_\pi-\Delta_0=a(\pi)+a(0)-2a$ and $C=(\Delta_\pi-\Delta_0)/\Delta$, $a$ is affine in
$C$:
\[
a(x)=\frac{a(\pi)+a(0)}{2}-\frac{\Delta}{2}\,C(x) .
\]
The zero-mean identity $\int_\T wf=0$ from
Lemma~\ref{lem:second-deriv} fixes the additive constant and gives
\begin{equation}\label{eq:affine-C}
w(x)=\frac{\Delta}{2}\bigl(\bar c-C(x)\bigr),\qquad
\bar c:=\int_0^\pi\cos\theta\,\eta(\theta)\,d\theta .
\end{equation}
Cubing \eqref{eq:affine-C} and pushing forward to $\theta$ gives
\[
\int_\T w^3 f\,dx
=\frac{\Delta^3}{8}\int_0^\pi(\bar c-\cos\theta)^3\eta(\theta)\,d\theta .
\]
By Lemma~\ref{lem:monotone-cosine-skewness}, the right-hand side is
nonnegative provided $\eta$ is nonincreasing. It therefore remains to
prove that $\eta$ is nonincreasing.

From $C=\cos\theta$ and $C_x<0$ we get
$\theta'=-C_x/\sin\theta=-C_x/\sqrt S$. Differentiating
$\log\eta=\log(2f)-\log\theta'$ in $\theta$ and using $(\log f)_x=-u$,
\[
\frac{d}{d\theta}\log\eta=-\frac{1}{\theta'}\left(u+\frac{\theta''}{\theta'}\right),
\]
which, after substituting $\theta'=-C_x/\sqrt S$, gives that $\eta$ is
nonincreasing iff
\begin{equation}\label{eq:eta-monotone-condition}
S\left(u+\frac{C_{xx}}{C_x}\right)+CC_x\geq0.
\end{equation}
Logarithmic differentiation of \eqref{eq:Cx-formula} gives
$C_{xx}/C_x=\cot x+Z_x/Z$, so the left side of
\eqref{eq:eta-monotone-condition} splits as
\begin{equation}\label{eq:eta-split}
S\left(u+\frac{C_{xx}}{C_x}\right)+CC_x
=\bigl[S(u+\cot x)+CC_x\bigr]+S\frac{Z_x}{Z}.
\end{equation}
The second term is nonnegative since $S=1-C^2\geq 0$ and $Z_x\geq 0$ by
Lemma~\ref{lem:Z-bounds}. For the first term, the geometric-mean
monotonicity (Lemma~\ref{lem:geometric-mean}) gives
\[
\frac{H_x}{H}
=-\left(u+\cot x-\frac{z(\Delta_\pi-\Delta_0)}{2\Delta_0\Delta_\pi}\right)\leq 0,
\]
and using $S=4\Delta_0\Delta_\pi/\Delta^2$ (since $\Delta_\pi+\Delta_0=\Delta$) together with
\eqref{eq:Cx-formula}, which gives $CC_x=-2z(\Delta_\pi-\Delta_0)/\Delta^2$, one
identifies
\[
S(u+\cot x)+CC_x=-S\frac{H_x}{H}\geq 0.
\]
Hence \eqref{eq:eta-monotone-condition} holds, and $\eta$ is
nonincreasing.
\end{proof}

The estimate for the gradient moment relies on the following central-positivity statement.

\begin{lemma}[Central positivity]\label{lem:central-positivity}
Assume $\zeta>0$. Let $\Omega\in C^1([0,\pi])$ be non-constant and
satisfy
\[
\Omega(\pi-x)=\Omega(x),\qquad \Omega'(x)\geq 0\quad\text{for }0<x<\pi/2.
\]
Then, with $w$ as in Lemma~\ref{lem:second-deriv},
\begin{equation}\label{eq:omega-positive}
\int_0^\pi w(x)\,\Omega(x)\,f(x)\,dx> 0 .
\end{equation}
\end{lemma}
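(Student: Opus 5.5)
The plan is to reduce \eqref{eq:omega-positive} by reflection across $\pi/2$ and an Abel-type summation to a positivity statement for the $f$-weighted integral of $w$ over the central intervals $[s,\pi-s]$. That statement I would then prove by integrating the divergence-form equation~\eqref{eq:w-energy-eq} over the complementary tails.

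\emph{Step 1 (reduction).} By evenness of $f$ and $w$ about $0$, the identity $\int_\T wf=0$ gives $\int_0^\pi wf=0$. Set
\[
G(s):=\int_s^{\pi-s} w f\,dx,\qquad s\in[0,\pi/2],
\]
so that $G(0)=G(\pi/2)=0$. Using $\Omega(\pi-x)=\Omega(x)$, the integral in~\eqref{eq:omega-positive} equals $\int_0^{\pi/2}\Omega(x)\,g(x)\,dx$, where $g(x):=w(x)f(x)+w(\pi-x)f(\pi-x)=-G'(x)$. Integrating by parts and using $G(0)=G(\pi/2)=0$ gives
\[
\int_0^\pi w\,\Omega\, f\,dx=\int_0^{\pi/2}\Omega'(s)\,G(s)\,ds .
\]
Since $\Omega$ is non-constant and $C^1$, and is nondecreasing on $(0,\pi/2)$, the derivative $\Omega'$ is nonnegative and positive on a set of positive measure there. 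It therefore suffices to show $G(s)>0$ for all $s\in(0,\pi/2)$.

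\emph{Step 2 (tail identity).} Integrate~\eqref{eq:w-energy-eq} over the tails $T_s:=[0,s]\cup[\pi-s,\pi]$. Recall $w_x=z$, and $fz$ vanishes at $0$ and $\pi$. Since $\int_{T_s}wf=-G(s)$ and $\int_0^\pi(A-\cos x)f=0$, this yields
\[
-\lambda G(s)=-h(s)-\bigl[f(s)z(s)-f(\pi-s)z(\pi-s)\bigr],\qquad h(s):=\int_s^{\pi-s}(A-\cos x)f\,dx .
\]
The boundary bracket is nonnegative. Indeed, $fz=(fZ)\sin x$, the sine factor is symmetric about $\pi/2$, and $fZ$ is nonincreasing by Lemma~\ref{lem:Z-bounds}. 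So $\lambda G(s)\geq h(s)$, and it remains to prove $h>0$ on $(0,\pi/2)$.

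\emph{Step 3 (sign of $h$; the main point).} We have $h(0)=0$ by the definition of $A$, and trivially $h(\pi/2)=0$. Differentiating gives
\[
h'(s)=\bigl(f(s)+f(\pi-s)\bigr)\bigl(q(s)-A\bigr),\qquad q(s):=\cos s\,\tanh\!\Bigl(\tfrac{v(\pi-s)-v(s)}{2}\Bigr).
\]
Since $u=v_x>0$ on $(0,\pi)$ by Lemma~\ref{lem:u-shape}, the function $v(\pi-s)-v(s)$ is positive and strictly decreasing on $(0,\pi/2)$. Both factors of $q$ are therefore nonnegative and nonincreasing, with $\cos s$ strictly decreasing, so $q$ is strictly decreasing. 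Consequently $h'$ changes sign at most once, from $+$ to $-$. Combined with $h(0)=h(\pi/2)=0$, this gives $h>0$ on $(0,\pi/2)$, unless $h'$ has constant sign. That case is excluded because it would force $h\equiv 0$, i.e.\ $q\equiv A$, contradicting strict monotonicity of $q$.

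Hence $G>0$ on $(0,\pi/2)$, and Step 1 concludes. I expect the only delicate point to be Step 3: pinning down the sign of $h$ through the single-crossing structure of $h'$. It is also worth double-checking that the boundary term in Step 2 indeed enters with the favorable sign via $(fZ)_x\le 0$.
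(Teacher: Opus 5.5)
There is a genuine gap in Step~2: the boundary term enters with the \emph{unfavorable} sign. Integrating $-(fw_x)_x=-(fz)_x$ over $[0,s]$ gives $-f(s)z(s)$ (since $z(0)=0$), and over $[\pi-s,\pi]$ it gives $+f(\pi-s)z(\pi-s)$ (since $z(\pi)=0$). Using $\int_{T_s}wf=-G(s)$ and $\int_{T_s}(A-\cos x)f=-h(s)$, the correct identity is
\[
\lambda G(s)=h(s)-\bigl[f(s)z(s)-f(\pi-s)z(\pi-s)\bigr].
\]
You are right that the bracket is nonnegative, by $(fZ)_x\le 0$ and the symmetry of $\sin x$. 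But this gives $\lambda G\le h$, not $\lambda G\ge h$. So the positivity of $h$ from Step~3, which is itself correctly proved, says nothing about $G$.

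No sharper estimate rescues the inequality. Expand to first order in $\zeta$; this needs $v$ to second order, namely $v=-\zeta k\cos x+\zeta^2k^2\bigl(\tfrac{\cos 2x}{4(\lambda+4)}-\tfrac{1}{4\lambda}\bigr)+O(\zeta^3)$ with $k=1/(\lambda+1)$. One finds
\[
h(s)\approx\frac{\zeta k\sin 2s}{4\pi},\qquad
\lambda G(s)\approx\frac{\zeta k\sin 2s}{4\pi}\cdot\frac{\lambda(\lambda+3)}{(\lambda+1)(\lambda+4)},
\]
so $\lambda G<h$ strictly, with a margin that degenerates as $\lambda\to0$. To close your route you would need the quantitative comparison $h(s)>f(s)z(s)-f(\pi-s)z(\pi-s)$. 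Nothing in the shape estimates provides it.

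Your Step~1 is correct and is exactly the paper's layer-cake reduction to $G(\alpha)>0$. The paper then proves $G>0$ without the energy equation. It uses $(fZ)_x\le0$ as a pointwise comparison of the \emph{integrand} $z$ with its mirror image: $z(x)\ge\rho(x)z(\pi-x)$, where $\rho(x)=f(\pi-x)/f(x)$ is increasing on $[0,\pi/2]$. It introduces the folded profile $\tilde w(x)=w(x)+\rho(x)w(\pi-x)$, which satisfies $\int_0^{\pi/2}\tilde wf=0$. Integrating the comparison first gives $w(\pi/2)>0$. Then
\[
\tilde w'=\bigl[z(x)-\rho(x)z(\pi-x)\bigr]+\rho'(x)\,w(\pi-x)>0,
\]
so $\int_0^\alpha\tilde wf<0$ for $0<\alpha<\pi/2$, and this integral equals $-G(\alpha)$. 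In that argument the monotonicity of $fZ$ pushes in the favorable direction; routing it through the boundary flux, as you do, reverses its effect.
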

\begin{proof}
Set $\bar x:=\pi-x$. With $z$ as in Lemma~\ref{lem:Z-bounds},
$w_x=z>0$, so $w$ is strictly increasing on $(0,\pi)$. Since $wf$ is
even about $0$ (Proposition~\ref{prop:smooth-zeta}),
Lemma~\ref{lem:second-deriv} gives $\int_0^\pi wf=0$.

The strategy is to recover $\Omega$ in~\eqref{eq:omega-positive} from a
family of central-window estimates via the layer-cake-style decomposition
\begin{equation}\label{eq:omega-decomp}
\Omega(x)=\Omega(0)+\int_0^{\pi/2}\Omega'(\alpha)\,
\mathbf 1_{\{\alpha\leq x\leq\pi-\alpha\}}\,d\alpha,
\end{equation}
which holds for every $C^1$ function on $[0,\pi]$ symmetric about
$\pi/2$ by the fundamental theorem of calculus.
Substituting~\eqref{eq:omega-decomp} into~\eqref{eq:omega-positive}
and using $\int_0^\pi wf=0$ to discard the $\Omega(0)$ term yields
\[
\int_0^\pi w(x)\,\Omega(x)\,f(x)\,dx
=\int_0^{\pi/2}\Omega'(\alpha)\!\int_\alpha^{\pi-\alpha}w(x)f(x)\,dx\,d\alpha.
\]
Since $\Omega'\geq 0$ on $(0,\pi/2)$ with strict positivity on a set of
positive measure ($\Omega$ is $C^1$ and non-constant),
\eqref{eq:omega-positive} reduces to
\begin{equation}\label{eq:central-positive}
\int_\alpha^{\pi-\alpha}w(x)f(x)\,dx>0,\qquad 0<\alpha<\pi/2.
\end{equation}

Set
\[
\rho(x):=\frac{f(\bar x)}{f(x)},\qquad 0\leq x\leq \pi/2.
\]
Since $f$ is strictly decreasing on $(0,\pi)$ (because $f_x=-uf$ with
$u>0$), $\rho$ is strictly increasing on $[0,\pi/2]$, with
$\rho(\pi/2)=1$ and $\rho'>0$ on $(0,\pi/2)$. The change of
variables $y=\bar x$ gives the reflection identity
\begin{equation}\label{eq:reflect-identity}
\int_0^\alpha \rho(x)f(x)\,g(\bar x)\,dx=\int_{\pi-\alpha}^\pi g(y)f(y)\,dy,
\qquad 0\leq\alpha\leq\pi/2,
\end{equation}
valid for any integrable $g$. By Lemma~\ref{lem:Z-bounds}, $fZ$ is
nonincreasing, so
\[
f(x)Z(x)\geq f(\bar x)Z(\bar x),\qquad 0\leq x\leq\pi/2.
\]
Multiplying by $\sin x=\sin\bar x$ and using $z=Z\sin x$,
\begin{equation}\label{eq:fz-reflect}
f(x)z(x)\geq f(\bar x)z(\bar x),\quad\text{i.e.}\quad
z(x)\geq\rho(x)z(\bar x),\qquad 0\leq x\leq \pi/2.
\end{equation}

We introduce the folded profile
\[
\tilde w(x):=w(x)+\rho(x)\,w(\bar x),\qquad 0\leq x\leq \pi/2.
\]
Applying~\eqref{eq:reflect-identity} to $g=w$ gives, for
$0\leq\alpha\leq\pi/2$,
\begin{equation}\label{eq:wt-fold}
\int_0^\alpha \tilde w(x)f(x)\,dx
=\int_0^\alpha wf+\int_{\pi-\alpha}^\pi wf .
\end{equation}
At $\alpha=\pi/2$, the right-hand side equals $\int_0^\pi wf=0$, so
\begin{equation}\label{eq:wt-zero-mean}
\int_0^{\pi/2} \tilde w(x)f(x)\,dx=0 .
\end{equation}

We first show $w(\pi/2)>0$. Fix $x\in[0,\pi/2)$.
Combining~\eqref{eq:fz-reflect} with the monotonicity of $\rho$, for
$s\in[x,\pi/2]$,
\[
z(s)\geq \rho(s)z(\bar s)\geq \rho(x)z(\bar s),
\]
the second inequality being strict on $(x,\pi/2)$. Since $w_x=z$ and
$(d/ds)w(\bar s)=-z(\bar s)$, integrating in $s$ over $[x,\pi/2]$
yields $w(\pi/2)-w(x)>\rho(x)\bigl(w(\bar x)-w(\pi/2)\bigr)$,
equivalently
\begin{equation}\label{eq:wt-below-mid}
\tilde w(x)<(1+\rho(x))\,w(\pi/2),\qquad 0\leq x<\pi/2.
\end{equation}
Integrating~\eqref{eq:wt-below-mid} against $f(x)$ over $[0,\pi/2]$,
the left-hand side is $0$ by~\eqref{eq:wt-zero-mean}, while the
right-hand side equals $w(\pi/2)\int_0^\pi f$. Since $\int_0^\pi f>0$,
$w(\pi/2)>0$.

Differentiating $\tilde w$,
\begin{equation}\label{eq:wt-mono}
\tilde w'(x)=\bigl[z(x)-\rho(x)z(\bar x)\bigr]+\rho'(x)\,w(\bar x)>0
\qquad\text{on }(0,\pi/2),
\end{equation}
since the bracket is nonnegative by~\eqref{eq:fz-reflect}, $\rho'>0$,
and $w(\bar x)\geq w(\pi/2)>0$ (as $w$ is strictly increasing on
$(0,\pi)$ and $\bar x\geq \pi/2$).

By~\eqref{eq:wt-zero-mean} and the strict
monotonicity~\eqref{eq:wt-mono},
\[
\int_0^\alpha \tilde w(x)f(x)\,dx<0,\qquad 0<\alpha<\pi/2.
\]
By~\eqref{eq:wt-fold} together with $\int_0^\pi wf=0$,
\[
\int_\alpha^{\pi-\alpha}wf
=-\int_0^\alpha wf-\int_{\pi-\alpha}^\pi wf
=-\int_0^\alpha \tilde w f>0.
\]
\end{proof}

\begin{proposition}[Sign of the gradient moment]\label{prop:gradient}
For every $\zeta>0$,
\[
\int_\T w\,z^2 f> 0.
\]
\end{proposition}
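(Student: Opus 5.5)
The plan is to write $z^2=Z^2\sin^2x$ and split $\sin^2x$ as a symmetric factor, so that the gradient moment becomes an integral of $w$ against a product of a symmetric weight and a monotone function. Since $w$, $z$ and $f$ are all even about $0$ (respectively $z$ is odd, so $z^2$ is even), we have $\int_\T wz^2f=2\int_0^\pi w\,Z^2\sin^2x\,f\,dx$. The weight $\Omega_0(x):=\sin^2x$ is symmetric about $\pi/2$, nondecreasing on $[0,\pi/2]$, and non-constant, so Lemma~\ref{lem:central-positivity} applies to it directly. The remaining factor $Z^2$ is positive and nondecreasing on $(0,\pi)$ by Lemma~\ref{lem:Z-bounds}, and $w$ is strictly increasing since $w_x=z>0$. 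The task is therefore to show that multiplying by a nondecreasing positive function preserves positivity of the integral of $w\,\Omega_0 f$.

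For this step I would use a correlation (Chebyshev-type) argument anchored at the sign change of $w$. Since $w$ is strictly increasing on $(0,\pi)$ and $\int_0^\pi wf=0$, it has a unique zero $x_0\in(0,\pi)$. The proof of Lemma~\ref{lem:central-positivity} shows $w(\pi/2)>0$, so in fact $x_0<\pi/2$. Then $(Z^2(x)-Z^2(x_0))\,w(x)\geq 0$ for all $x\in(0,\pi)$, because both factors change sign at $x_0$ in the same direction. This gives
\[
\int_0^\pi wZ^2\sin^2x\,f\,dx\;\geq\; Z^2(x_0)\int_0^\pi w\,\sin^2x\,f\,dx\;>\;0,
\]
where the last inequality is Lemma~\ref{lem:central-positivity} with $\Omega=\sin^2x$, and $Z(x_0)>0$ by Lemma~\ref{lem:Z-bounds}. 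Strictness comes entirely from the central-positivity lemma, so no further strictness argument is needed for the correlation step.

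The main obstacle is the choice of splitting. A naive Chebyshev inequality with the measure $f\,dx$ and the pair $(w,z^2)$ fails, because $z^2$ is not monotone on $(0,\pi)$: it vanishes at both endpoints. Factoring out $\sin^2x$ fixes this, since it isolates the non-monotone part into a symmetric weight handled by the reflection lemma, while $Z^2$ carries the monotone part. I would double-check two points. First, that $\sin^2x$ meets the $C^1$ and non-constancy hypotheses of Lemma~\ref{lem:central-positivity}; it clearly does. Second, that the endpoints cause no integrability issue; they do not, since $Z$ extends smoothly to $[0,\pi]$.
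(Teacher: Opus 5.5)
Your proof is correct and follows the paper's route: you write $z^2=Z^2\sin^2x$, apply central positivity (Lemma~\ref{lem:central-positivity}) with $\Omega=\sin^2x$, and use a correlation step in which $w$ and $Z^2$ are both nondecreasing. The only difference is in that correlation step. The paper uses the symmetrized Chebyshev inequality under $\nu=\sin^2x\,f\,dx$, which produces the factor $\nu([0,\pi])^{-1}\int Z^2\,d\nu$. You instead anchor at the zero $x_0$ of $w$, which produces the factor $Z^2(x_0)>0$; either one suffices.
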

\begin{proof}
Let $Z$ be as in Lemma~\ref{lem:Z-bounds}. Since $w$, $z^2$, and $f$
are even with respect to $0$ (Proposition~\ref{prop:smooth-zeta}), it suffices
to prove the sign on $[0,\pi]$.

Let $d\nu=\sin^2x\,f(x)\,dx$ on $[0,\pi]$. Since $w_x=z$, both $w$
and $Z^2$ are nondecreasing by Lemma~\ref{lem:Z-bounds}. For any pair
of functions $p,q$ on $[0,\pi]$ one has the symmetrization identity
\[
\nu([0,\pi])\int pq\,d\nu-\int p\,d\nu\int q\,d\nu
=\frac12\iint_{[0,\pi]^2}(p(x)-p(y))(q(x)-q(y))\,d\nu(x)d\nu(y),
\]
whose right-hand side is non-negative whenever $p$ and $q$ are both
nondecreasing, since then $(p(x)-p(y))(q(x)-q(y))\geq 0$ for all $x,y$. Applying it with $p=w$ and $q=Z^2$
gives
\[
\int_0^\pi wZ^2\,d\nu
\geq
\frac{1}{\nu([0,\pi])}\int_0^\pi w\,d\nu\int_0^\pi Z^2\,d\nu .
\]
Here $\int w\,d\nu>0$ by Lemma~\ref{lem:central-positivity} applied to
$\Omega(x)=\sin^2 x$, while $\int Z^2\,d\nu>0$ because $Z>0$ on
$(0,\pi)$. Hence
\[
\int_0^\pi w(x)z(x)^2f(x)\,dx
=\int_0^\pi w(x)Z(x)^2\sin^2x\,f(x)\,dx>0.
\]
By evenness, $\int_\T wz^2 f=2\int_0^\pi wz^2 f>0$.
\end{proof}

\begin{proof}[Proof of Theorem~\ref{thm:main}]
By Lemma~\ref{lem:second-deriv},
\[
A''(\zeta)
=-\lambda\int_\T w^3 f
-3\int_\T w\,z^2 f.
\]
The first integral is nonnegative by Proposition~\ref{prop:cubic} and
the second is strictly positive by Proposition~\ref{prop:gradient}.
Since $\lambda>0$, $A''(\zeta)<0$. Hence $A$, and so the original map
$F_\kappa(\gamma)=\kappa A(4\gamma/\sigma^4)$, is strictly concave.
This is part~(i).

For part~(ii), recall the properties~\eqref{eq:F-properties} and
that, by \cite[Prop.~6.4]{CCS}, a probability measure is a
non-uniform stationary equilibrium of the Kuramoto mean field game if
and only if, up to a rotation of $\T$, it equals $\mu^{\zeta_*}$ for
some positive fixed point $\gamma_*$ of $F_\kappa$, where
$\zeta_*:=4\gamma_*/\sigma^4$ is the corresponding dimensionless
parameter. Setting
$g_\kappa(\gamma):=F_\kappa(\gamma)-\gamma$, it therefore suffices to
show that $g_\kappa$ has no positive zero for $\kappa\leq\kappa_c$ and
a unique positive zero $\gamma_*(\kappa)$ for $\kappa>\kappa_c$, that
$\kappa\mapsto\gamma_*(\kappa)$ is continuous on $(\kappa_c,\infty)$,
and that $\gamma_*(\kappa)\to 0$ as $\kappa\downarrow\kappa_c$.

From~\eqref{eq:F-properties}, $g_\kappa(0)=0$,
$g_\kappa'(0)=\kappa/\kappa_c-1$, and
$g_\kappa(\gamma)\leq\kappa-\gamma\to-\infty$ as $\gamma\to\infty$.
Part~(i) gives that $g_\kappa$ is strictly concave on $[0,\infty)$. By
strict concavity the tangent at $0$ strictly dominates:
\begin{equation}\label{eq:tangent-bound}
g_\kappa(\gamma)<g_\kappa'(0)\,\gamma\qquad\text{for }\gamma>0.
\end{equation} Hence if
$\kappa\leq\kappa_c$, then $g_\kappa'(0)\leq 0$ and $g_\kappa<0$ on
$(0,\infty)$, so there is no positive zero. If $\kappa>\kappa_c$, then
$g_\kappa'(0)>0$ makes $g_\kappa$ positive on a right neighborhood of
$0$, while $g_\kappa\to-\infty$, so by continuity $g_\kappa$ has a
positive zero. Strict concavity allows $g_\kappa$ to take any value at
most twice, and $g_\kappa(0)=0$ already accounts for one zero, so the
positive zero is unique. Denote it by $\gamma_*(\kappa)$, with
corresponding dimensionless parameter $\zeta_*(\kappa)$. Then $g_\kappa>0$ on $(0,\gamma_*(\kappa))$ and $g_\kappa<0$
on $(\gamma_*(\kappa),\infty)$, and by \cite[Prop.~6.4]{CCS},
$\mu_\kappa=\mu^{\zeta_*(\kappa)}$.

At $\gamma_*(\kappa)$, $g_\kappa$ crosses from positive to negative,
so $g_\kappa'(\gamma_*(\kappa))<0$ by strict concavity. Since
$(\kappa,\gamma)\mapsto g_\kappa(\gamma)$ is jointly smooth, the
implicit function theorem yields that $\kappa\mapsto\zeta_*(\kappa)$
is $C^\infty$ on $(\kappa_c,\infty)$. By
Proposition~\ref{prop:smooth-zeta}, $\zeta\mapsto f^\zeta$ is smooth
as a map into $C^\infty(\T)$, so the composition
$\kappa\mapsto f^{\zeta_*(\kappa)}$ is $C^\infty$ into $C^\infty(\T)$.

It remains to show $f^{\zeta_*(\kappa)}\to 1/(2\pi)$ in $C^\infty(\T)$
as $\kappa\downarrow\kappa_c$. Since $\zeta\mapsto f^\zeta$ is
continuous at $\zeta=0$ into $C^\infty(\T)$ and $f^0\equiv 1/(2\pi)$,
it suffices to show $\zeta_*(\kappa)\to 0$, equivalently
$\gamma_*(\kappa)\to 0$. Evaluating~\eqref{eq:tangent-bound} at
$\kappa=\kappa_c$ gives
$g_{\kappa_c}<0$ on $(0,\infty)$, so by joint continuity of $g_\kappa$
in $(\kappa,\gamma)$, for any $\epsilon>0$ there is $\delta>0$ with
$g_\kappa(\epsilon)<0$ for $\kappa\in(\kappa_c,\kappa_c+\delta)$. Since
$g_\kappa>0$ on $(0,\gamma_*(\kappa))$, this forces
$\gamma_*(\kappa)<\epsilon$, and $\gamma_*(\kappa)\to 0$ follows.
\end{proof}


\begin{thebibliography}{ABPVS05}

\bibitem[ABPVS05]{ABPVS}
J.~A.~Acebr\'on, L.~L.~Bonilla, C.~J.~P\'erez Vicente, F.~Ritort, and
  R.~Spigler.
\newblock The {K}uramoto model: a simple paradigm for synchronization
  phenomena.
\newblock \emph{Reviews of Modern Physics} 77 (2005), no.\ 1, 137--185.

\bibitem[BGP14]{BGP}
L.~Bertini, G.~Giacomin, and C.~Poquet.
\newblock Synchronization and random long time dynamics for mean-field
  plane rotators.
\newblock \emph{Probability Theory and Related Fields} 160 (2014),
  no.\ 3--4, 593--653.

\bibitem[CD18]{CarmonaDelarue}
R.~Carmona and F.~Delarue.
\newblock \emph{Probabilistic Theory of Mean Field Games with
  Applications, I--II}.
\newblock Probability Theory and Stochastic Modelling, vols.\ 83--84.
  Springer, 2018.

\bibitem[CCS23]{CCS}
R.~Carmona, Q.~Cormier, and H.~M.~Soner.
\newblock Synchronization in a {K}uramoto mean field game.
\newblock \emph{Communications in Partial Differential Equations}
  48 (2023), no.\ 9, 1214--1244. arXiv:2210.12912.

\bibitem[CCS25]{CCS2}
R.~Carmona, Q.~Cormier, and H.~M.~Soner.
\newblock {K}uramoto mean field game with intrinsic frequencies.
\newblock Preprint, 2025. arXiv:2509.18000.

\bibitem[CC24]{CesaroniCirant}
A.~Cesaroni and M.~Cirant.
\newblock Stationary equilibria and their stability in a {K}uramoto {MFG}
  with strong interaction.
\newblock \emph{Communications in Partial Differential Equations}
  49 (2024), no.\ 1--2, 121--147. arXiv:2307.09305.

\bibitem[CG20]{CG}
R.~Carmona and C.~V.~Graves.
\newblock Jet lag recovery: synchronization of circadian oscillators
  as a mean field game.
\newblock \emph{Dynamic Games and Applications} 10 (2020), no.\ 1,
  79--99.

\bibitem[Cir19]{Cirant}
M.~Cirant.
\newblock On the existence of oscillating solutions in non-monotone
  mean-field games.
\newblock \emph{Journal of Differential Equations} 266 (2019), no.\ 12,
  8067--8093.

\bibitem[GPP12]{GPP}
G.~Giacomin, K.~Pakdaman, and X.~Pellegrin.
\newblock Global attractor and asymptotic dynamics in the {K}uramoto
  model for coupled noisy phase oscillators.
\newblock \emph{Nonlinearity} 25 (2012), no.\ 5, 1247--1273.

\bibitem[GT01]{GT}
D.~Gilbarg and N.~S.~Trudinger.
\newblock \emph{Elliptic Partial Differential Equations of Second Order}.
\newblock Reprint of the 1998 edition. Classics in Mathematics.
Springer, Berlin, 2001.

\bibitem[HMC06]{HMC}
M.~Huang, R.~P.~Malham\'e, and P.~E.~Caines.
\newblock Large population stochastic dynamic games: closed-loop
  {M}c{K}ean--{V}lasov systems and the {N}ash certainty equivalence
  principle.
\newblock \emph{Communications in Information \& Systems} 6 (2006),
  no.\ 3, 221--252.

\bibitem[HS25]{HS}
F.~H\"ofer and H.~M.~Soner.
\newblock Synchronization games.
\newblock \emph{Mathematics of Operations Research} 51 (2025),
  no.\ 2, 1443--1462. arXiv:2402.08842.

\bibitem[Kur75]{Kuramoto}
Y.~Kuramoto.
\newblock Self-entrainment of a population of coupled non-linear
  oscillators.
\newblock In \emph{International Symposium on Mathematical Problems in
  Theoretical Physics}, Lecture Notes in Physics, vol.\ 39, pages
  420--422. Springer, 1975.

\bibitem[LL07]{LL}
J.-M.~Lasry and P.-L.~Lions.
\newblock Mean field games.
\newblock \emph{Japanese Journal of Mathematics} 2 (2007), no.\ 1,
  229--260.

\bibitem[Str00]{Strogatz}
S.~H.~Strogatz.
\newblock From {K}uramoto to {C}rawford: exploring the onset of
  synchronization in populations of coupled oscillators.
\newblock \emph{Physica D: Nonlinear Phenomena} 143 (2000), no.\ 1--4,
  1--20.

\bibitem[YMMS12]{YMMS}
H.~Yin, P.~G.~Mehta, S.~P.~Meyn, and U.~V.~Shanbhag.
\newblock Synchronization of coupled oscillators is a game.
\newblock \emph{IEEE Transactions on Automatic Control} 57 (2012),
  no.\ 4, 920--935.

\end{thebibliography}
\end{document}